\documentclass[a4paper,10pt,reqno]{amsart}
\usepackage[foot]{amsaddr}
\usepackage[english]{babel}
\usepackage[utf8]{inputenc}
\usepackage{graphicx}
\graphicspath{{figures/}}
\usepackage{todonotes} 
\usepackage{subcaption}
\usepackage[colorlinks=true,linkcolor=black,urlcolor=black,citecolor=black]{hyperref}
\usepackage{hyperref}
\usepackage{amsmath}
\usepackage{amsfonts}
\usepackage{amssymb,amsmath}
\usepackage{amsthm}
\usepackage{fixmath}
\usepackage{enumerate}
\usepackage[shortlabels]{enumitem}
\usepackage{mathtools}
\usepackage{xcolor}
\usepackage{comment}
\usepackage{algorithm}
\usepackage{algpseudocode}
\numberwithin{equation}{section}
\numberwithin{figure}{section}

\newtheorem{theorem}{Theorem}[section]

\newtheorem{proposition}[theorem]{Proposition}

\newtheorem{remark}{Remark}[section]

\DeclareMathOperator{\ii}{i}

\numberwithin{equation}{section}

%%%%%%%%%%%%%%%
\newcommand{\x}{x}
\newcommand{\xx}{y}
\renewcommand{\d}{\mathop{}\!\mathrm{d}}
\newcommand{\bbR}{\mathbb{R}}

\newcommand{\bbZ}{\mathbb{Z}}

\newcommand{\EX}{\mathcal{F}}
\newcommand{\IM}{\mathcal{G}}
\newcommand{\IMd}{{g}}
\newcommand{\EXd}{{f}}

\newcommand{\cker}{c_{\gamma}}
\newcommand{\cpot}{c_F}
\newcommand{\kernel}{\gamma}
\newcommand{\kerneld}{\gamma_\delta}
\newcommand{\J}{\mathcal{J}}
\newcommand{\E}{\mathcal{E}}
\renewcommand{\tt}{\Delta t}
\newcommand{\D}{\Omega}

\renewcommand{\t}{\theta}
\newcommand{\p}{u}

\newcommand{\proxl}{{\rm prox}_{\frac{1}{\lambda}\psi}}
\newcommand{\proxlN}{{\rm prox}_{\frac{1}{\lambda_N}\psi}}
\newcommand{\Jim}{ \J^{2,\rm im}_k}
\newcommand{\Jex}{ \J^{2,\rm ex}_k}
 % minimization functional for the implicit second order scheme
\newcommand{\Sh}{S_h}
\newcommand{\Shw}{\hat{S}_h}
%Operators

%\newcommand{\intu}{\int_{\Omega\cup\Omega_{\CI}}}
\newcommand{\thetalog}{\theta_c}
\newcommand*{\norm}[1]{{\left\lVert#1\right\rVert}}
\newcommand*{\normd}[1]{{\left\lVert#1\right\rVert}_2}
\newcommand*{\inner}[1]{{\left(#1\right)}}
\newcommand*{\innerd}[1]{{\left\langle#1\right\rangle}}

%%%%%%%%%%%%%%%
\newcommand{\calL}{\mathcal{L}}

\DeclareMathOperator{\argmin}{arg\min}

\theoremstyle{remark}
\newcommand{\astcirc}{%
  \mathrel{\ooalign{\hfil$\ast$\hfil\cr\hfil\textcircled{\hfil}\hfil}}%
}

\begin{document}

\title[Proximal-based approach for nonlocal Allen-Cahn equations]{An efficient proximal-based approach for solving nonlocal Allen-Cahn equations}

\author[O. Burkovska \and I. Mustapha]{Olena Burkovska${}^{1}$ \and Ilyas Mustapha${}^{2}$}
\address{${}^1$ Computer Science and Mathematics Division, Oak Ridge National Laboratory, USA}
\email{burkovskao@ornl.gov}
\address{${}^2$ Department of Mathematics, Kansas State University, Manhattan, KS, USA}
\email{ilyas1@ksu.edu}
\thanks{
This material is based upon work supported by {the National Science Foundation MSGI program} and the U.S. Department of Energy, Office of Advanced Scientific Computing Research, Applied Mathematics Program under the award numbers ERKJ345; and was performed at the Oak Ridge National Laboratory, which is managed by UT-Battelle, LLC under Contract No. De-AC05-00OR22725. 
The US government retains and the publisher, by accepting the article for publication, acknowledges that the US government retains a nonexclusive, paid-up, irrevocable, worldwide license to publish or reproduce the published form of this manuscript, or allow others to do so, for US government purposes. DOE will provide public access to these results of federally sponsored research in accordance with the DOE Public Access Plan (https://www.energy.gov/doe-public-access-plan).
}

\keywords{phase-field models, nonlocal operators, proximal operators, variational inequalities, FFT, energy stability}
\subjclass[2010]{45K05; 35K55; 35B65; 49J40; 65M60; 65K15}
%\date{}
\maketitle

\begin{abstract}
In this work, we present an efficient approach for the spatial and temporal discretization of the nonlocal Allen-Cahn equation, which incorporates various double-well potentials and an integrable kernel, with a particular focus on a non-smooth obstacle potential.
While nonlocal models offer enhanced flexibility for complex phenomena, they often lead to increased computational costs and there is a need to design efficient spatial and temporal discretization schemes, especially in the non-smooth setting.
To address this, we propose first- and second-order energy-stable time-stepping schemes combined with the Fourier collocation approach for spatial discretization. 
 We provide energy stability estimates for the developed time-stepping schemes. 
A key aspect to our approach involves a representation of a solution via proximal operators. This together with the spatial and temporal discretizations enables direct evaluation of the solution that can bypass the solution of nonlinear, non-smooth, and nonlocal system.
This method significantly improves computational efficiency, especially in the case of non-smooth obstacle potentials, and facilitates rapid solution evaluations in both two and three dimensions. We provide several numerical experiments to illustrate the effectiveness of our approach.
    
\end{abstract}

\section{Introduction}\label{sec:introduction}

Phase-field models, such as Allen-Cahn and Cahn-Hilliard, are widely used to describe phase-transition phenomena in various applications, including solidification in materials science~\cite{kobayashi1993,wheeler1993}, image processing~\cite{benes2004}, and mean-curvature flow~\cite{DuFeng2020}. Typically, local phase-field models, based on partial derivatives, are considered. However, incorporating nonlocality in phase-field modeling enhances versatility for capturing anomalous processes and is better suited to describe 
sharp interfaces. Unlike local models, nonlocal models rely on integral operators with a nonlocal kernel that accounts for long-range interactions. While nonlocal settings provide greater modeling flexibility, they often require more careful numerical treatment to develop efficient methods for discretizing the integral operator.

In this work, we propose an efficient discretization of the nonlocal Allen-Cahn equation with various potential functions, including \emph{regular}, \emph{singular} (\emph{Flory-Huggins} or \emph{logarithmic}), and \emph{obstacle} potentials. This encompasses a broad range of models, from nonlinear to nonlinear non-smooth types. Our method is based on characterizing the solution using the proximal operator, combined with efficient evaluation of the convolution through fast Fourier transform (FFT).
In contrast to existing methods, our approach can handle a wide variety of non-smooth or singular potentials without the need for additional approximations, such as regularizing the logarithmic potential~\cite{GLWW2014} or using Moreau-Yosida regularization for the obstacle potential~\cite{gwiazda2024rate,hintermuller2011CH}.

More specifically, we consider the model of the following form:
\begin{equation}
     \partial_t u + \calL u + \partial F(u) \ni  0,\quad\text{ in } (0,T)\times\D,\label{eq:general}
\end{equation}
where $\D\subset\mathbb{R}^n$, $n\leq 3$, is a bounded domain, $T>0$ is a fixed time, and the model is supplemented with a suitable initial condition and periodic boundary conditions. 
The operator $\calL\colon L^2(\D)\to L^2(\D)$ is a nonlocal diffusion operator defined as
\begin{eqnarray}
    \mathcal{L}u(x) = \int_\Omega (u(\x)-u(\xx)) \kernel(\x-\xx)\d\xx=\cker u(\x)-(\kernel*u)(\x),\quad\cker>0,\label{nonlocal_op}
\end{eqnarray}
where $\cker =(\kernel*1)=\int_{\D}\kernel(\x-\xx)\d\xx$ and $\kernel\colon\D\to\mathbb{R}$ is an integrable convolution kernel that sets up the law and extend of nonlocal interactions. The nonlocal energy associated with~\eqref{eq:general} has the following form:
\begin{equation}
    \E(u):=%\frac{1}{2}(\calL u,u)+\int_{\D}F(u)\d\x\\
   \frac{1}{4}\int_{\D}\int_{\D}(u(\x)-u(\xx))^2\kernel(\x-\xx)\d\x\d\xx+\int_{\D}F(u)\d\x,
    \label{eq:GL_nonlocal}
\end{equation}
which is an analogue of the local Ginzburg-Landau energy.

Here, $F$ is a double-well potential that attains two local minima when the phase-field variable $u$ is at pure phases $\pm 1$ (e.g., solid and liquid) and is the driving force of the phase-separation encouraging the solution to attain pure phases. 
More specifically, we set 
\begin{equation}
    F(u)=f_0(u)+\psi(u),\label{eq:potential_general}
\end{equation}
where $f_0\colon\bbR\to\bbR $ is concave differentiable part of $F$ and $\psi\colon\bbR\to\bbR\cup\{\infty\}$ is a  (potentially non-differentiable) convex lower semi-continuous function. 
Three common types of double-well potentials: regular, logarithmic (Flory-Huggins) and obstacle, can be expressed in the form of~\eqref{eq:potential_general} with
\begin{equation}
    f_0(u)=\frac{\cpot}{2}(1-u^2),\quad \cpot>0,\label{eq:potential_f0}
\end{equation}
and $\psi$ to be defined for the obstacle potential by a convex indicator function:
\begin{equation}
     \psi_{\rm obs}(u)=\mathbb{I}_{[-1,1]}(u)= 
    \begin{cases}
        0,~\text{ if } u\in[-1,1]\\
        +\infty, \text{ otherwise}
    \end{cases},\label{eq:potential_psi_obstacle}
\end{equation}
for the logarithmic (Flory-Huggins) potential\footnote{Here, the values at $u=\pm 1$ are defined by the limit of the function that leads to $\frac{\thetalog}{2}\ln(2)$. } by
\begin{equation}
    \psi_{\rm log}(u) =\ 
    \begin{cases}
        \frac{\thetalog}{2}\left((1+u)\ln(1+u)+(1-u)\ln(1-u)\right),\quad\text{if}\; u\in[-1,1],\\
        +\infty,\quad \text{otherwise},
    \end{cases}\label{eq:pot_logarithm}
    \end{equation}
with $0<\thetalog<\cpot$ and for the regular potential by
\begin{eqnarray}
    \psi_{\rm reg}(u)=\frac{\cpot}{4}\left(u^4-1\right),\quad \cpot>0.\label{regular_potential}
\end{eqnarray}
The corresponding generalized subdifferential~\eqref{eq:subdiff_potential} is then defined as follows
\begin{equation}
    \partial F(u)=f_0^\prime(u)+\partial\psi(u)=-\cpot u + \partial\psi(u),\label{eq:subdiff_potential}
\end{equation}
with the subdifferential $\partial\psi(u)$ being
    \begin{subequations}
        \begin{align}
        \partial\psi_{\rm obs}=&\ \partial \mathbb{I}_{[-1, 1]}(u):=
\begin{cases} (-\infty, 0], & u=-1,\\ 
0, & u \in (-1, 1),\\
[0, \infty), & u=1,
\end{cases}\label{eq:subdiff_obs}\\
\partial\psi_{\rm log} =&
\begin{cases} 
\emptyset, & u\not\in[-1,1],\\
\left\{\frac{\thetalog}{2}\left(\ln(1+u)-\ln(1-u)\right)\right\}, & u\in (-1,1),
\end{cases} \label{eq:subdiff_log}\\
\partial\psi_{\rm reg} = &\ \{ \cpot u^3\}.\label{eq:subdiff_reg}
    \end{align}\label{eq:subdiff_all}
 \end{subequations}
Using the definitions~\eqref{nonlocal_op} and~\eqref{eq:subdiff_potential}, the equation~\eqref{eq:general} transforms to:
\begin{eqnarray}
    \partial_t u +\xi u -\kernel*u +\partial\psi(u)\ni 0,\label{eq:general_xi}
\end{eqnarray}
where $\xi=\cker-\cpot\geq 0$ is a nonlocal interface parameter that defines the width of the transition region between pure phases. 

A notable feature of nonlocal models is that they are more naturally suited to describe sharp-interface dynamics, see, e.g.,~\cite{BatesChmaj1999,bates1997,burkovska2023,burkovska2021nonlocal,Chen1997,du2019,DuYang2016,DuYang2017,fife1997,LWY2017} and the references cited therein. In particular, the nonlocal Allen-Cahn and Cahn-Hilliard models allow for well-defined sharp-interfaces in the solution at steady state and transient times, respectively. Unlike the corresponding local models, where sharp interfaces in the solution are achieved only in the asymptotic case of vanishing local interface parameter, nonlocal framework provides more natural and well-defined description to model discontinuous profiles in the solution. The choice of a double-well potential function greatly influences the interface sharpness. In particular, for $\xi=0$ an obstacle potential can lead to solutions with perfectly sharp interfaces, with jump discontinuities in the solution, representing pure phases with no intermediate transitions~\cite{burkovska2023,burkovska2021nonlocal}, while a regular potential for $\xi<0$ results in interfaces that, although discontinuous, may exhibit a narrow transition region between phases~\cite{DuYang2016}. 
To illustrate this, we plot on Figure~\ref{potentials} various potentials and the corresponding solutions of~\eqref{eq:general} close to a steady-state. We observe that the obstacle potential yields the sharpest interface, while the regular potential results in a more diffuse one. For the Flory-Huggins potential, the sharpness of the interface is influenced by the parameter \(\thetalog\); specifically, smaller values of \(\thetalog\) produce a sharper interface, as a decrease in \(\thetalog\) causes the logarithmic potential to approach the non-smooth obstacle potential.

The sharp interface property is inherent to the structure of the model~\eqref{eq:general}, which we are also going to explore to design efficient temporal and spatial discretizations and solution strategies. Specifically, exploiting~\eqref{eq:general} and the potential~\eqref{eq:potential_general}, we can derive a representation formula for the solution using proximal operators, as detailed below. This representation enables simplified and efficient computation of the solution, in some cases completely eliminating the need to solve nonlinear and non-smooth systems.

\subsection{Representation formula}\label{sec:introduction1}
\begin{figure}[t!]
    \centering
    \begin{subfigure}[h]{0.22\textwidth}
        \centering        \includegraphics[width=\textwidth]{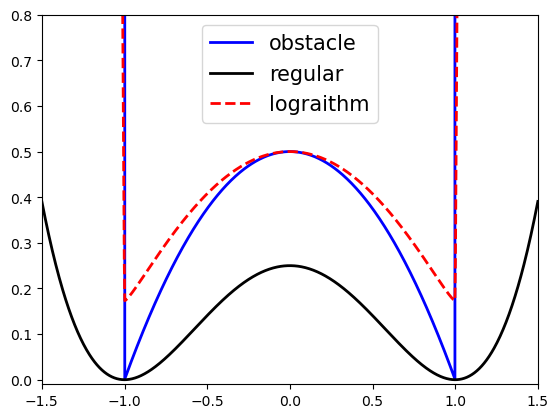}
        \caption*{potential}
    \end{subfigure}
    \begin{subfigure}[h]{0.22\textwidth}
        \centering        \includegraphics[width=\textwidth]{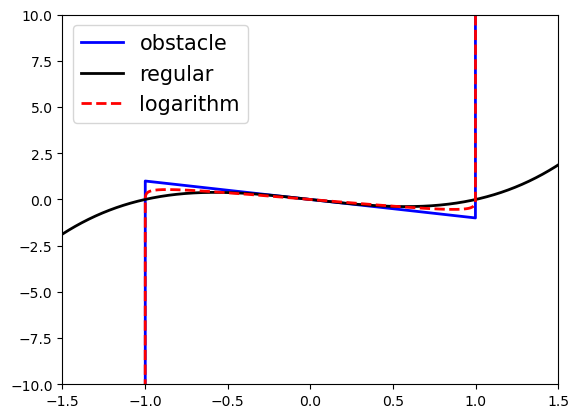}
        \caption*{subdifferential}
    \end{subfigure}
    \begin{subfigure}[h]{0.22\textwidth}
        \centering        \includegraphics[width=\textwidth]{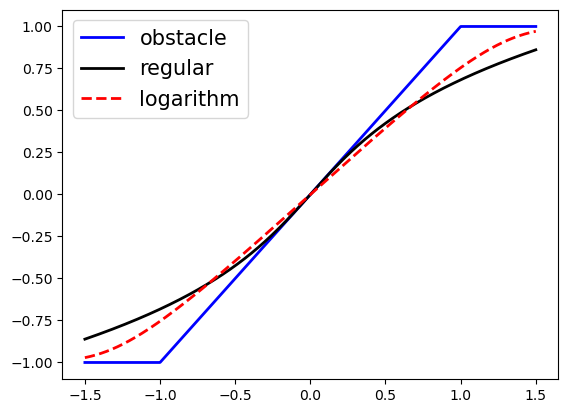}
        \caption*{proximal operator}
    \end{subfigure}
     \begin{subfigure}[h]{0.22\textwidth}
        \centering        \includegraphics[width=\textwidth]{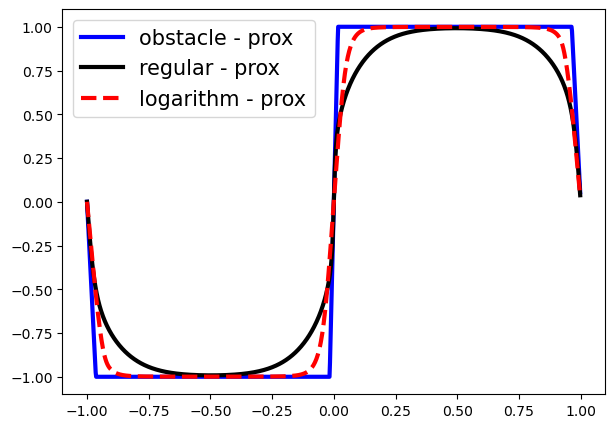}
        \caption*{solution}
    \end{subfigure}
    \caption{From left to right: Illustration of different potentials $F$, corresponding sub-differentials $\partial F$, and proximal operators (for $\lambda = 1$), and the solution of~\eqref{eq:general_xi} at $t=10$ ($\cpot=1$, $\xi=0.001$, and $\theta=0.25$).}
    \label{potentials}
\end{figure}
We discuss briefly, how one can characterize the time-discrete solution of~\eqref{eq:general_xi} in terms of the proximal operators and derivation of the representation formula. 
Employing a semi-implicit Euler time-stepping scheme (more details on this and higher order schemes are discussed in Section~\ref{sec:temporal_disc}) with an explicit discretization of the convolution to~\eqref{eq:general_xi} leads to:
\begin{equation}
    \frac{1}{\tt}\left(u^{k}-u^{k-1}\right)+\xi u^k-\kernel*u^{k-1}+\partial\psi(u^k)\ni 0,\label{eq:first_order_tk_intro}
\end{equation}
with $k=1,\dots,K$, $\tt=T/K$, $K\in\mathbb{N}$, which is also equivalent to
\begin{equation}
    u^k = \proxl\left(\frac{1}{\lambda}\left(\kernel*u^{k-1}+\frac{1}{\tt}u^{k-1}\right)\right),\label{eq:prox_u}
\end{equation}
with $\lambda=\xi+1/\tt>0$. Here, $\proxl\colon\bbR\to\bbR$ is a proximal operator of $\frac{1}{\lambda}\psi$~\cite{bauschke2017book,boyd2013proximal}:  defined as 
\begin{equation*}
    \proxl(v) = \argmin_{s} \left(\frac{1}{\lambda}\psi(s)+\frac{1}{2}|s-v|^2\right),
\end{equation*}
which is also related to the subdifferential $\partial\psi$ as
\begin{equation}
    \proxl=\left(I+\frac{1}{\lambda}\partial\psi\right)^{-1},\quad \lambda = 1/\tt +\xi>0.\label{eq:prox}
\end{equation} 
Here, the relation~\eqref{eq:prox_u} should be understood pointwise, in a sense that $u^k(\x)=\proxl(g^k(x)/\lambda)$ with $g^k(x)=(\kernel*u^{k-1})(\x)+\frac{1}{\tt}u^{k-1}(\x)$ for $\x\in\Omega$.

The representaton~\eqref{eq:prox_u} provides a useful approach from a computational standpoint. Knowing the proximal operator one can directly evaluate the solution at each time step using its values from the previous time step. In particular, for the case of an obstacle potential ($\psi(u)=\mathbb{I}_{[-1,1]}(u)$), the proximal operator reduces to the pointwise projection operator and ~\eqref{eq:prox_u} becomes
\begin{equation}
    u^k = P_{[-1,1]}\left(\frac{1}{\lambda}\left(\kernel*u^{k-1}+\frac{1}{\tt}u^{k-1}\right)\right).\label{eq:proj_u}
\end{equation}
For the case of a regular potential, the proximal operator is computed by using Cardano's formula \cite{witula2010cardano} and  \eqref{eq:prox_u}  becomes
\begin{equation}
    u^k = \proxl\left(\frac{1}{\lambda}\zeta_k\right)=\sqrt[3]{\tilde{\zeta}_k+\sqrt{\tilde{\zeta}_k^2 + \left({\lambda}/{3c_F}\right)^3}} + \sqrt[3]{\tilde{\zeta}_k-\sqrt{\tilde{\zeta}_k^2 + \left({\lambda}/{3c_F}\right)^3}},\label{eq:prox_formula_reg_order1}
\end{equation}
where $ \zeta_k = \left(\kernel*u^{k-1}+\frac{1}{\tt}u^{k-1}\right)$ and $\tilde{\zeta}_k=\zeta_k/(2\cpot)$. 
For the Flory-Huggins potential, to the best of our knowledge, there is no known closed-form expression for the proximal operator. Instead, numerical approaches such as Newton's or bisection methods can be used to approximate the solution. This can be performed very efficiently, as it is a one-dimensional problem at each point in space and can be easily parallelized.
Overall, since representation \eqref{eq:proj_u} only involves explicit terms, it allows for direct computation at each time step, eliminating the need for nonlinear or non-smooth solvers. Moreover, the proximal operator representation enhances stability properties of the solution for the cases of regular and logarithmic potentials.

\subsection{Contribution and related works}

There have been numerous works on mathematical and numerical aspects of nonlocal phase-field models, see, e.g., survey papers~\cite{bates2006survey,DuFeng2020,miranville2017review}. Most commonly, models with smooth potentials of regular or logarithmic type have been studied. Phase-field models with the non-smooth obstacle potential in the local settings have been analyzed in~\cite{blank2012,blank2011,bosch2014,hintermuller2013}. Discretization of nonlocal obstacle models have been studied in, e.g.,~\cite{borthagarayObstacle2019,BurkovskaGunzburger2019,burkovska2021nonlocal,GuanObstacle2017}.
In this work, we present a novel approach for a spatial and temporal discretization of the nonlocal Allen-Cahn model based on discretization of an equivalent solution representation via proximal operators in \eqref{eq:prox_u}. This representation for the obstacle potential, which leads to a projection formula, has been derived in our previous works~\cite{BurkovskaGunzburger2019, burkovska2023}. Here, we generalize it to regular and logarithmic potentials and use it to design efficient temporal and spatial discretizations, including energy-stable first and second-order time-stepping schemes.

We comment on related works that investigate spatial and temporal discretizations for nonlocal phase-field systems. Design and study of second-order convex splitting time-stepping schemes for the nonlocal Allen-Cahn and Cahn-Hilliard equations with the smooth potential have been conducted in~\cite{GLWW2014,guan2017}. There, the authors discretize convolution term explicitly, while the nonlinearity is treated implicitly. Explicit discretization of both convolution and nonlinearity has been done in~\cite{bates2009AC}, where $L^\infty$ stability and convergence are provided. Exponential time differencing method (ETD) was adopted for the nonlocal Allen-Cahn model with the regular potential in~\cite{du2019}. 
In \cite{DuJuLiQiao2018}, the authors developed stabilized linear schemes with implicit discretization of the nonlocal term for the Cahn-Hilliard model with regular potential. This approach allows a solution of the discrete system in frequency space using a Fourier collocation method with fast Fourier transform (FFT). We adopt a similar strategy for fast convolution evaluation, but focus on a solution method in physical space instead of frequency space.

For the spatial discretization of nonlocal models, various methods have been developed in recent years, based on finite elements~\cite{AinsworthGlusa2017,TianDuAC2014}, finite-differences~\cite{bates2009AC,guan2017,GLWW2014}, spectral approximations~\cite{alali2020fourier,alali2021fourier,jafarzadeh2020efficient,mustapha2023regularity} or model-reduction approaches~\cite{BurkovskaGunzburger2019,Guan2017RBM}; see also survey papers~\cite{DElia2020Acta,TianDuSiamRev2020}. 
Finite element or finite difference approaches can provide more flexibility in terms of boundary conditions and geometry of the domain, but may lack efficiency compared to spectral approximations. When dealing with nonlocal phase-field system, an additional nonlinearity and non-smoothness inherent to the model needs to be taken into account for the design of efficient discretization strategies. 
For smooth potential functions, spatial discretizations of nonlocal phase-field model based on finite-differences~\cite{bates2009AC,du2019,guan2017,GLWW2014}
 or spectral approximations~\cite{DuJuLiQiao2018,DuYang2016,hartley2009} have been actively studied in the literature.
 Recent deep learning algorithms based on neural networks have also been successfully employed to predict the dynamics of both local~\cite{GTWJ2024} and nonlocal~\cite{GengBur2024} Allen-Cahn and Cahn-Hilliard equations.
 For the case of the obstacle potential finite element discretization has been provided in our previous works~\cite{burkovska2023,BurkovskaGunzburger2019}. 
Adapting spectral approximations to such systems is not straightforward due to the underlying nonlinear and non-smooth nature. Common solution algorithms for these systems, whether in local or nonlocal settings, typically involve a primal-dual active set algorithm~\cite{kunisch2002} based on a semi-smooth Newton method. This approach requires an iterative method at each time step to identify an admissible set, which can be computationally expensive in a nonlocal context. 
Instead, we exploit the structure of the problem and representation~\eqref{eq:proj_u} and adopt Fourier spectral approximation for the convolution that can be efficiently performed with FFT with $\mathcal{O}(N\log N)$ complexity, where $N$ being the number of collocation points.
The corresponding solution algorithm simplifies to evaluating  the pointwise projection and allows generalization to regular and logarithmic potentials through the use of proximal operators.
We also comment that the proximal operator has been employed in the context of the Allen-Cahn equation with regular potential on graphs in~\cite{luo2017convergence}.

The rest of the paper is organized as follows. In Section~\ref{sec:temporal_disc}, we present the construction and analysis of first and second-order time-stepping schemes. We prove the energy stability properties of the time-stepping schemes, as well as the convergence via fixed-point iteration needed for the efficient realization of the second-order scheme. In Section~\ref{sec:spatial_disc}, details are provided on spatial discretization using Fourier collocation approximation based on FFT, including the discretization of the representation formula. Section~\ref{sec:numerics} presents several numerical results in one, two, and three dimensions that illustrate the method's efficiency and accuracy. Furthermore, in Section~\ref{sec:NAC}, we demonstrate the applicability of the method to a non-isothermal Allen-Cahn system with several test cases.
Finally, in Section~\ref{sec:conclusion}, we summarize the findings and discuss potential avenues for future research.

\section{Temporal discretization}\label{sec:temporal_disc}

In what follows we assume that the nonlocal kernel $\kernel$ is integrable, positive, radial $\kernel(\x)=\hat{\kernel}(|\x|)\in L^1(\Omega)$, $\D$-periodic, and has a finite second moment $\int_{\D}\kernel(\x)|\x|^2\d\x=2n$, $\Omega\subset\bbR^n$, $n\leq 3$. 
We define the bilinear and quadratic form
\begin{equation}
  \inner{u,v}_{\kernel}=(\kernel*u,v),\quad \norm{u}_\kernel^2 = (\kernel*u,u),
\end{equation}
where $(\cdot,\cdot)$ stands for the usual $L^2$-inner product.
We also recall the following property for a differentiable convex functional $q\colon L^2(\D)\to\bbR$:
\begin{equation}
    q(u)\geq q(v)+(\nabla q(v),u-v),\quad\forall u,v\in L^2(\D). \label{eq:convexity}
\end{equation}
Then, we introduce the following convex and concave functions
\begin{align}
g(u):=\frac{\xi}{2}\norm{u}^2+\frac{\cpot}{2}|\D|,\quad f(u)=-\frac12(\kernel*u,u)=-\frac12\norm{u}^2_{\gamma},  \label{eq:tk_terms_en}
\end{align}
where $\xi=\cker-\cpot\geq 0$, and the corresponding derivatives:
\begin{equation}
  \IM(u)=\nabla \IMd(u)=\xi u,\quad\EX(u)=\nabla\EXd(u)=-\gamma*u.\label{eq:tk_terms}
\end{equation}
In terms of the above notation we can rewrite~\eqref{eq:general_xi} as follows
\begin{equation}
  \partial_t u + \IM(u)+ \EX(u)+\partial\psi(u)\ni 0,  \label{eq:general_form}
\end{equation}
together with the corresponding nonlocal Ginzburg-Landau energy~\eqref{eq:GL_nonlocal}:
\begin{align*}
    \E(u)%=\frac{1}{2}(\calL u,u)+\int_{\D}F(u)
    %=\frac{\xi}{2}\norm{u}^2-\frac{1}{2}(\kernel*u,u)
    =\frac{\xi}{2}\norm{u}^2-\frac{1}{2}\norm{u}^2_{\gamma}
    +\frac{\cpot}{2}|\D|+\int_{\D}\psi(u)\d\x
    =\IMd(u)+\EXd(u)+\int_{\D}\psi(u)\d\x.
\end{align*}
We also introduce the following set of admissible solutions
\begin{equation*}
    \mathcal{K}:=\{v\in L^2(\D)\colon \psi(v)<+\infty\},
\end{equation*}
where $\psi$ is one of the options in~\eqref{eq:potential_psi_obstacle}--\eqref{regular_potential}. We point out that in case of an obstacle and logarithmic potential $\mathcal{K}=\{v\in L^2(\D)\colon |v|\leq 1\}$, whereas for the regular potential $\mathcal{K}=L^2(\D)$.

Next, we introduce a temporal discretization of the problem and derive the first and second order energy stable time-stepping schemes. 

\subsection{Semi-implicit first order scheme}\label{subsec:first_order_disc}
For $T>0$ and $K\in\mathbb{N}$ we define $\tt=T/K$, $t^k=k\tt$, $k=1,\dots,K$. 
Then, for a given $u^0\in L^2(\D)$ as already outlined in Section~\ref{sec:introduction1} we consider the following semi-implicit time discretization of~\eqref{eq:general_form}:
\begin{equation}
    \frac{1}{\tt}(u^{k}-u^{k-1})+\IM(u^{k})+\EX(u^{k-1})+\partial\psi(u^{k}) \ni 0.\label{eq:first_order_tk}
\end{equation}
We also note that the variational formulation of~\eqref{eq:first_order_tk} leads to the following problem: Find $u^k\in\mathcal{K}$ such that the following holds:
 \begin{equation}
    \left(\frac{1}{\tt}(u^{k}-u^{k-1})+\IM(u^{k})+\EX(u^{k-1}),v-u^k\right)\geq 0,\quad \forall v\in\mathcal{K}.\label{eq:vi_tk1}
\end{equation}
For the obstacle potential, 
$\partial\psi(u)=\partial\mathbb{I}_{[-1,1]}(u)$~\eqref{eq:subdiff_obs}, the above equation leads to a semi-discrete variational inequality due to the inequality constraints, whereas, for regular and logarithmic potentials, it simplifies to an equality.
 
Furthermore, it can be shown, see, e.g,~\cite{burkovska2021nonlocal}, that at each time step $k=1,\dots,K$ the solution $u^k$ of~\eqref{eq:first_order_tk} can be equivalently found by minimizing the following energy
\begin{equation*}
      u^k=\argmin \J_k(u),\quad k=1,\dots,K,
\end{equation*}
with
\begin{multline}
 \J_k(u):=\frac{\xi}{2}\norm{u}^2-\frac{1}{2}\norm{u^{k-1}}_\gamma^2-(\kernel*u^{k-1},u-u^{k-1})
    +\frac{1}{2\tt}\norm{u-u^{k-1}}^2\\
    +\frac{\cpot}{2}|\D|+\int_{\D}\psi(u)\d\x
    =g(u)+f(u^{k-1})+\left(\nabla f(u^{k-1}),u-u^{k-1}\right)\\
    +\frac{1}{2\tt}\norm{u-u^{k-1}}^2+\int_{\D}\psi(u)\d\x.
    \label{energy_tk}
\end{multline}
The above minimization problem admits a closed form solution given by the representation formula~\eqref{eq:prox_u}. Furthermore, the semi-implicit scheme is unconditionally energy stable, as stated by the following result.

\begin{proposition}[Energy stability for the first-order scheme]\label{prop:energy_stab1}
For the solution {$u^k\in\mathcal{K}$} of~\eqref{eq:first_order_tk} the following energy stability estimates hold
\begin{eqnarray}
    \E(u^k)\leq\J_k(u^k)\leq \J_k(u^{k-1})=\E(u^{k-1}).\label{energy_comp1}
\end{eqnarray}
\end{proposition}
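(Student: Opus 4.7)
The plan is to establish the chain of inequalities from right to left, starting with the identity at $u^{k-1}$, then using the minimization characterization of $u^k$, and finally using concavity of $f$ to pass from $\J_k(u^k)$ down to $\E(u^k)$. The key ingredients are precisely the three structural facts already set up in the excerpt: the variational characterization $u^k = \argmin \J_k$, the splitting $\E = g + f + \int\psi$ with $g$ convex and $f$ concave, and the convexity inequality \eqref{eq:convexity}.

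First I would verify the rightmost equality $\J_k(u^{k-1}) = \E(u^{k-1})$ by direct substitution into the bottom expression for $\J_k$ in \eqref{energy_tk}: the linear correction $(\nabla f(u^{k-1}), u-u^{k-1})$ and the quadratic penalty $\frac{1}{2\tt}\norm{u-u^{k-1}}^2$ both vanish at $u = u^{k-1}$, so only $g(u^{k-1}) + f(u^{k-1}) + \int_\D \psi(u^{k-1})\,\mathrm{d}x$ remains, which is exactly $\E(u^{k-1})$. Note this step silently requires $u^{k-1} \in \mathcal{K}$, which we get inductively from the admissibility of $u^0$ combined with the minimization statement at the previous step.

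Next, the middle inequality $\J_k(u^k) \leq \J_k(u^{k-1})$ is immediate from the fact (recalled just above the proposition) that $u^k$ is the minimizer of $\J_k$ over $\mathcal{K}$, together with $u^{k-1} \in \mathcal{K}$ being a valid test element.

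The main substantive step is the leftmost inequality $\E(u^k) \leq \J_k(u^k)$. Here I would invoke concavity of $f$, applied through \eqref{eq:convexity} to the convex function $-f$, yielding
\begin{equation*}
  f(u^k) \leq f(u^{k-1}) + \bigl(\nabla f(u^{k-1}),\, u^k - u^{k-1}\bigr).
\end{equation*}
Substituting this bound into $\E(u^k) = g(u^k) + f(u^k) + \int_\D \psi(u^k)\,\mathrm{d}x$ and comparing term by term with the second form of $\J_k(u^k)$ in \eqref{energy_tk} leaves exactly the nonnegative discrepancy $\tfrac{1}{2\tt}\norm{u^k - u^{k-1}}^2 \geq 0$, giving the desired inequality (and in fact a strengthened form with a dissipation term on the right-hand side).

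I do not anticipate a real obstacle; the proposition is a standard convex-concave splitting energy-stability argument, and the only point requiring a bit of care is confirming at each step that the admissibility set $\mathcal{K}$ is preserved so the minimization comparison is legitimate, and that the concavity inequality is applied with the correct sign convention (via \eqref{eq:convexity} on $-f$ rather than directly on $f$).
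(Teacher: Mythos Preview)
Your proposal is correct and follows essentially the same route as the paper: the rightmost equality and middle inequality are dispatched by direct substitution and the minimization property, while the leftmost inequality is obtained by applying the concavity inequality for $f$ (via \eqref{eq:convexity} on $-f$) and observing that the remaining discrepancy is the nonnegative term $\frac{1}{2\tt}\norm{u^k-u^{k-1}}^2$. Your explicit remark about needing $u^{k-1}\in\mathcal{K}$ for the comparison is a welcome clarification that the paper leaves implicit.
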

\begin{proof}
    The inequality $\J_k(u^k)\leq \J_k(u^{k-1})$ and an equality $\E_k(u^{k-1})= \J_k(u^{k-1}) $ follow directly from the definition of $\J_k(u)$ and $\E(u)$, and fact that $u^k$ is the solution of~\eqref{energy_tk}. To prove the first inequality for $u\in\mathcal{K}$ we consider 
    \begin{multline*}
        \E(u)-\J_k(u)=f(u)-f(u^{k-1})-(\nabla f(u^{k-1}),u-u^{k-1})\\
        -\frac{1}{2\tt}\norm{u-u^{k-1}}^2
        \leq-\frac{1}{2\tt}\norm{u-u^{k-1}}^2\leq 0,
    \end{multline*}
where we have used the fact that $-f$ is a convex function and, thus, ${f}(v)\leq {f}(z)+\inner{\nabla {f}(z),v-z}$. Taking $u=u^k$ in the above estimate leads to the desired result and concludes the proof. 
\end{proof}

\subsection{Second-order scheme}
Next, we provide the second-order time-stepping scheme and derive the corresponding energy-stability estimates. 
\subsubsection{Implicit second-order scheme}
For the obstacle potential we consider the following second-order time-stepping scheme: Find $u^k\in\mathcal{K}$, such that
\begin{equation}
    \frac{1}{\tt}\left(u^{k}-u^{k-1}\right)+\IM(u^{k/2})+\EX(u^{k/2})+\partial\psi(u^{k})\ni 0.\label{eq:second_order_im_tk_obs}
\end{equation}
Analogously, for the regular and logarithmic potentials we consider:
\begin{equation}
    \frac{1}{\tt}\left(u^{k}-u^{k-1}\right)+\IM(u^{k/2})+\EX(u^{k/2})+\frac{1}{2}\left(\psi^\prime(u^{k})+\psi^\prime(u^{k-1})\right)=0,\label{eq:second_order_im_tk_reglog}
\end{equation}
where $u^{k/2}=\frac{1}{2}(u^k+u^{k-1})$ and $\psi^\prime$ is defined in~\eqref{eq:subdiff_reg} and~\eqref{eq:pot_logarithm}.

It can be shown that at each time step the above scheme is equivalent to the following minimization problem: for $k=1,\dots,K$ find $u^k$ that minimizes
\begin{equation}
     u^k=\argmin\Jim(u), \quad k=1,\dots,K,   \label{eq:energy_second_order_im}
\end{equation}
where
\begin{align*}
\begin{aligned}
   \Jim(u):=\ \frac{1}{2\tt}\norm{u-u^{k-1}}^2 + \frac{1}{2}\left(g(u^{k-1})+g(u)\right) + \frac12\left(f(u^{k-1})+f(u)\right) \\
   \ + \frac12\left(\nabla f(u^{k-1}),u-u^{k-1}\right) + \frac12\left(\nabla g(u^{k-1}),u-u^{k-1}\right){+\int_{\D}\Psi(u)\d\x},
    %=\frac{1}{2\tt}\norm{u-u^{k-1}}^2+\frac12\left(\E(u^{k-1})+\E(u))\right)+\frac{1}{2}\left(\IMEX(u^{k-1}),u-u^{k-1}\right){+\int_{\D}\Psi(u)\d\x},
\end{aligned}
\end{align*}
where  $\Psi(u)=\frac{1}{2}\left(\psi(u)+\psi(u^{k-1})\right)+\frac12\psi^\prime(u^{k-1})\left(u-u^{k-1}\right)$ for the regular and logarithmic potentials and $\Psi(u)=\psi(u)$ for the obstacle potential, respectively.

Next, we realize the above scheme by using a fixed point method. Namely,
given $u^{k-1}$, $k=1,\dots, K$ we find $u^k$ by solving 
\begin{equation}
    \frac{1}{\tt}\left(u_m^{k}-u^{k-1}\right)+\IM(u_m^{k/2})+\EX(u_{m-1}^{k/2})+\partial\psi^\star(u_m^{k})\ni 0,\label{eq:second_oder_fi}
\end{equation}
where $u_m^{\frac{k}{2}}:=\frac{1}{2}\left(u_m^k+u^{k-1}\right)$ and $\partial\psi^\star(u_m^k)=\frac12\left(\psi^\prime(u_m^k)+\psi^\prime(u^{k-1})\right)$ for the regular and logarithmic potentials and  $\partial\psi^\star(u^k)=\partial\psi(u_m^k)$ for the obstacle potential.
The above is iterated for $m=1,2,\dots,M$, with initialization $u_0^k=u^{k-1}$, until the tolerance is reached $\norm{u^{k}_m-u_{m-1}^k}<TOL$. Each iteration of the fixed point method can be realized as the minimization problem. Namely, for $m=1,\dots,M$ with $u_0^k=u^{k-1}$, the solution $u_m^k$ solves 
\begin{equation}
    u^k_m = \argmin j_k[u_{m-1}^k](u),\quad m=1,\dots,M,\quad k=1,\dots,K,\label{eq:min_problem_fi}
\end{equation}
where
\begin{align*}
    j_k[\zeta](u)=&\ \frac{1}{2\tt}\norm{u-u^{k-1}}^2 + \frac{1}{2}\left(g(u^{k-1})+g(u)\right) + \frac12\left(f(u^{k-1})+f(\zeta)\right) \\
   &\ + \frac12\left(\nabla g(u^{k-1}),u-u^{k-1}\right) +  \frac12\left(\nabla f(u^{k-1}),u-u^{k-1}\right) \\
   &\ + \frac12\left(\nabla f(\zeta),u-\zeta\right){+\int_{\D}\Psi(u)\d\x }.
\end{align*}

Then, at each $m-$th iteration we solve~\eqref{eq:second_oder_fi}, or equivalently~\eqref{eq:min_problem_fi}, by applying the proximal operator~\eqref{eq:prox}. In particular, for the regular~\eqref{regular_potential} and Flory-Huggins~\eqref{eq:pot_logarithm} potentials, for $m=1,2,\dots$ with $u_0^k=u^{k-1}$ until $\norm{u_m^k-u_{m-1}^k}<TOL$ we evaluate 
\begin{align}
   u_m^k={\rm prox}_{\frac{1}{2\lambda}\psi}\left(\frac{1}{\lambda}\left(\left(\lambda-\xi\right)u^{k-1}+\kernel*u^{k/2}_{m-1}-\frac12\psi^\star(u^{k-1})\right)\right),\label{eq:prox_m}
\end{align}
where $\psi^*=\psi^\prime(u^{k-1})$ for regular and logarithmic potentials and $\psi^*(u^{k-1})=0$ for the obstacle potential. Here, we recall ${\rm prox}_{\frac{1}{2\lambda}\psi}=(I+\frac{1}{2\lambda}\partial\psi)^{-1}$ with $\lambda = 1/\tt+\xi/2$ and $\kernel*u^{k/2}_{m-1}=\kernel*\left(u^{k-1}+u_{m-1}^k\right)/2$.
In case of the obstacle potential the proximal operator reduces to the projection formula:
\begin{align}
   u_m^k=P_{[-1,1]}\left(\frac{1}{\lambda}\left(\kernel*u^{k/2}_{m-1}+\left(\frac{1}{\tt}-\frac{\xi}{2}\right)u^{k-1}\right)\right),\label{eq:projection_m}
\end{align}
with $\lambda = 1/\tt + \xi/2$. For the regular potential, 
the proximal operator is obtained using a Cardano's formula. That is, \eqref{eq:prox_m} becomes
\begin{equation}
    u^k = {\rm prox}_{\frac{1}{2\lambda}\psi}\left(\frac{1}{\lambda}\zeta_k\right)=\sqrt[3]{\tilde{\zeta}_k+\sqrt{\tilde{\zeta}_k^2 + \left({2\lambda}/{3c_F}\right)^3}} + \sqrt[3]{\tilde{\zeta}_k-\sqrt{\tilde{\zeta}_k^2 + \left({2\lambda}/{3c_F}\right)^3}},
\end{equation}
where $\zeta_k = \left(\left(\lambda-\xi\right)u^{k-1}+\kernel*u^{k/2}_{m-1}-\frac12\psi^{\prime}(u^{k-1})\right)$ and $\tilde{\zeta}_k=\zeta_k/\cpot$. For the Flory-Huggins potential, however, there is no known closed-form expression for the proximal operator and we use a Newton’s method to compute it.

\subsubsection{Explicit second order scheme}
Given $u^{k-1}$, $k=1,\dots,K$, we find the solution at the next time step, $u^{k}$, using a two-stage scheme (which is obtained taking two steps of the fixed point iteration of the second order implicit scheme~\eqref{eq:second_oder_fi}): First, given $u^{k-1}$ we find $u_1^k$ by solving
\begin{equation}
    \frac{1}{\tt}\left(u_1^k-u^{k-1}\right)+\IM(u_1^{k/2})+\EX(u^{k-1})+\partial\psi^\star(u_1^k) \ni 0,
\end{equation}
and having $u_1^k$ and an old iterate $u^{k-1}$ we find $u^k$ by solving
\begin{equation}
    \frac{1}{\tt}\left(u^k-u^{k-1}\right)+\IM(u^{{k}/{2}})+\frac{1}{2}\left(\EX(u^{k-1})+\EX(u_1^k)\right)+\partial\psi^\star(u^k)\ni 0.
\end{equation}
This is equivalent to solving the following minimization problems:
\begin{align}\label{eq:energy_second_order_ex}
\begin{aligned}
     u^k=&\ \argmin\Jex(u)\quad\text{with}\quad
   \Jex(u):=j_k[u^k_1](u),\\
   \text{and}\quad u_1^k=&\ \argmin j_k[u^{k-1}](u).
\end{aligned}
\end{align}
Again, the above scheme can be realized by using the representation formula~\eqref{eq:prox_m} with $m=2$. In the following algorithm we summarize the computational procedure for the second order scheme: 
\begin{algorithm}
\caption{Solution algorithm}
\label{alg:fixed_point_iteration}
\begin{algorithmic}[1]
\State \textbf{Input:} $u^{0}$, $TOL,M,K>0$.
\State \textbf{Output:} Solution at a final time step $u^{K}$.
\For{$k=1,\dots,K$}
\State \textbf{Set} $m=1$ and $u^k_0=u^{k-1}$.
\State Compute $\kernel*u^{k-1}$.
        \While{$m < M$ \textbf{or} $\norm{u_{m-1}^k-u^k_m}>TOL$}
        \State Compute $\kernel*u_{m-1}^k$ for $m>1$.
        \State Evaluate $q_m^k:=\left(\frac{1}{\tt}-\frac{\xi}{2}\right)u^{k-1}+\kernel*u_{m-1}^{k/2} -\frac12\psi^*(u^{k-1})$.
           \State Compute $u_m^{k}={\rm prox}_{\frac{1}{2\lambda}\psi}\left(\frac{1}{\lambda}q_m^k\right)$, $\lambda=1/\tt+\xi/2$. 
           \State {Set} $m=m+1$ and continue.
        \EndWhile
        \State Set $u^k=u_m^k$ and continue.
\EndFor
\end{algorithmic}
\end{algorithm}

\subsubsection{Energy stability estimates}
Next, we establish energy stability estimates for the second-order scheme. 
First, we derive an auxiliary estimate for an upper bound of the nonlocal Ginzburg-Landau energy.
\begin{proposition}\label{prop:energy_comparison_upperbd}
For the obstacle potential $\psi(u)$~\eqref{eq:potential_psi_obstacle}, and $\tt<2/\xi$  we get:
    \begin{equation}
        \E(u)\leq \Jim(u), \quad\forall u\in\mathcal{K}.\label{eq:energy_comparison_upperbd}
    \end{equation}
    Furthermore, the above holds true for the regular and logarithmic potential function $\psi$~\eqref{regular_potential},~\eqref{eq:pot_logarithm}, under an additional assumption $|\psi^{\prime\prime}(u)|\leq C$ for $u\in [-\rho,\rho]$, $\rho>0$ and $\tt<2/(\xi+C/2)$. 
\end{proposition}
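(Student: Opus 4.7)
The plan is to write $\E(u)-\Jim(u)$ as an explicit quadratic functional in $u-u^{k-1}$, with an explicit coefficient depending on $\xi$, $\tt$, and (in the regular/logarithmic case) the bound $C$ on $\psi''$; I then verify that the time-step restriction makes this coefficient non-positive. The key observation is that, in $\Jim$, the $g$- and $f$-contributions are essentially trapezoidal-with-linear-correction quadrature of their convex/concave parts, so the comparison with $\E$ reduces to second-order Taylor-type remainders.

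First, I would subtract the definitions termwise. The two copies of $g(u)$ and $f(u)$ inside the brackets combine with the missing $g(u^{k-1})$, $f(u^{k-1})$ to give
\[
\E(u)-\Jim(u)=\tfrac{1}{2}\bigl[g(u)-g(u^{k-1})-\langle\nabla g(u^{k-1}),u-u^{k-1}\rangle\bigr]
+\tfrac{1}{2}\bigl[f(u)-f(u^{k-1})-\langle\nabla f(u^{k-1}),u-u^{k-1}\rangle\bigr]
-\tfrac{1}{2\tt}\|u-u^{k-1}\|^2+\int_{\D}(\psi(u)-\Psi(u))\d\x.
\]
Since $g(u)=\tfrac{\xi}{2}\|u\|^2+\tfrac{\cpot}{2}|\D|$ is quadratic, the first bracket evaluates exactly to $\tfrac{\xi}{2}\|u-u^{k-1}\|^2$, contributing $\tfrac{\xi}{4}\|u-u^{k-1}\|^2$. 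Since $-f(u)=\tfrac{1}{2}\|u\|_\gamma^2$ is convex (the kernel is non-negative), the convexity inequality \eqref{eq:convexity} applied to $-f$ shows the second bracket is non-positive; in fact it equals $-\tfrac{1}{2}\|u-u^{k-1}\|_\gamma^2$, which I simply drop as a non-positive term.

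Next I would treat the $\psi$-contribution. For the obstacle case $\Psi\equiv\psi$, so this integral is identically zero and combining everything gives $\E(u)-\Jim(u)\le\bigl(\tfrac{\xi}{4}-\tfrac{1}{2\tt}\bigr)\|u-u^{k-1}\|^2$, which is non-positive precisely when $\tt<2/\xi$. For the regular and logarithmic potentials, a direct rearrangement of the definition of $\Psi$ yields
\[
\psi(u)-\Psi(u)=\tfrac{1}{2}\bigl[\psi(u)-\psi(u^{k-1})-\psi'(u^{k-1})(u-u^{k-1})\bigr],
\]
which is a second-order Taylor remainder at $u^{k-1}$. Under the hypothesis $|\psi''|\le C$ on $[-\rho,\rho]$ (applied pointwise in $\x$, with $\rho=1$ being automatic for the logarithmic potential via $\mathcal K$), integral-form Taylor bounds this pointwise by $\tfrac{C}{4}(u-u^{k-1})^2$. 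Integrating over $\D$ produces an additional $\tfrac{C}{4}\|u-u^{k-1}\|^2$, and altogether
\[
\E(u)-\Jim(u)\le\Bigl(\tfrac{\xi}{4}+\tfrac{C}{4}-\tfrac{1}{2\tt}\Bigr)\|u-u^{k-1}\|^2,
\]
so the required non-positivity holds under the stated time-step smallness condition.

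The main obstacle lies in the regular-potential case, since $\mathcal K=L^2(\D)$ offers no intrinsic $L^\infty$-bound on $u$; one must therefore justify that the iterates stay inside $[-\rho,\rho]$ to legitimately invoke $|\psi''|\le C$ (which is why the assumption is stated pointwise on $[-\rho,\rho]$ rather than globally). The logarithmic and obstacle cases avoid this issue because admissibility already constrains $|u|,|u^{k-1}|\le 1$. Aside from that, the proof is a clean bookkeeping exercise combining the quadratic identity for $g$, the concavity inequality for $f$, and a Taylor remainder estimate for $\psi$.
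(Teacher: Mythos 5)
Your decomposition is exactly the paper's: you split $\E(u)-\Jim(u)$ into the three half-weighted Taylor remainders for $g$, $f$, and $\psi$ plus the stabilization term $-\frac{1}{2\tt}\norm{u-u^{k-1}}^2$, evaluate the $g$-remainder exactly as $\frac{\xi}{4}\norm{u-u^{k-1}}^2$ (the paper does this via $2a(a-b)=a^2+(a-b)^2-b^2$), drop the $f$-remainder by concavity of $f$, and for the smooth potentials bound $\int_{\D}(\psi(u)-\Psi(u))\d\x$ by an integral-form Taylor remainder using $|\psi''|\leq C$. For the obstacle potential your argument is complete and identical to the paper's.

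There is, however, a mismatch in your last step for the regular/logarithmic case. From $\Psi(u)=\frac12(\psi(u)+\psi(u^{k-1}))+\frac12\psi'(u^{k-1})(u-u^{k-1})$ you correctly obtain the pointwise bound $\psi(u)-\Psi(u)=\frac12\int_0^1(1-\tau)\psi''(u_\tau)(u-u^{k-1})^2\d\tau\leq\frac{C}{4}(u-u^{k-1})^2$ with $u_\tau=\tau u+(1-\tau)u^{k-1}$, so your final coefficient is $\frac{\xi}{4}+\frac{C}{4}-\frac{1}{2\tt}$; its non-positivity requires $\tt\leq 2/(\xi+C)$, not the stated $\tt<2/(\xi+C/2)$, so your closing sentence does not follow from your own display. (The paper's proof arrives at $\frac{C}{8}\norm{u-u^{k-1}}^2$ because it carries a prefactor $\frac14$ in front of the integral remainder, which is off by a factor of $2$ relative to the stated definition of $\Psi$; your constant is the consistent one.) The fix is either to state the restriction as $\tt\leq 2/(\xi+C)$ or to absorb the factor into the constant bounding $|\psi''|$ — a bookkeeping adjustment, not a conceptual change. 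Your caveat that for the regular potential one needs the arguments to remain in $[-\rho,\rho]$ (since $\mathcal{K}=L^2(\D)$ gives no intrinsic bound) is legitimate, and the paper leaves this point implicit as well, so you are aligned with the paper there.
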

\begin{proof}
First, we consider the case of an obstacle potential. Using the definitions of $\E(u)$, $\Jim(u)$ in~\eqref{eq:tk_terms_en} and~\eqref{eq:tk_terms}, together with the fact that  $f$ is concave, i.e., $-f$ satisfies~\eqref{eq:convexity}, we estimate
\begin{multline*}
    \E(u)-\Jim(u)= -\frac{1}{2\tt}\norm{u-u^{k-1}}^2
    +\frac12\left(\IMd(u)-\IMd(u^{k-1}) -(\nabla\IMd(u^{k-1}),u-u^{k-1})\right)\\
    +\frac12\left( \EXd(u)-\EXd(u^{k-1})-(\nabla\EXd(u^{k-1}),u-u^{k-1})\right)\\
    \leq 
    -\frac{1}{2\tt}\norm{u-u^{k-1}}^2 +\frac{\xi}{4}\norm{u}^2-\frac{\xi}{4}\norm{u^{k-1}}^2+\frac{\xi}{2}\left(u^{k-1},u^{k-1}-u\right)\\
    \leq \left(\frac{\xi}{4}-\frac{1}{2\tt}\right)\norm{u-u^{k-1}}^2\leq 0,
\end{multline*}
where we have used the the relation $2a(a-b)=a^2+(a-b)^2-b^2$ and the fact that $\xi$ is typically chosen to be sufficiently small such that $\tt<2/\xi$.
For the regular and logarithmic potentials the proof follows similar lines. In particular, using an estimate for the remainder of the Taylor's series we obtain
\begin{multline*}
    \E(u)-\Jim(u) \leq \left(\frac{\xi}{4}-\frac{1}{2\tt}\right)\norm{u-u^{k-1}}^2 + \int_{\D}\left(\psi(u)-\Psi(u)\right)\d\x\\
   % =\left(\frac{\xi}{4}-\frac{1}{2\tt}\right)\norm{u-u^{k-1}}^2+\frac12\int_{\D}\left(\psi(u)-\psi(u^{k-1})-\psi^\prime(u^{k-1})(u-u^{k-1})\right)\d\x\\
    \leq \left(\frac{\xi}{4}-\frac{1}{2\tt}\right)\norm{u-u^{k-1}}^2+\frac14\int_{\D}\int_0^1|\psi^{\prime\prime}(u_{\tau})|(u-u^{k-1})^2(1-\tau)\d\tau\d\x\\
    \left(\frac{\xi}{4}-\frac{1}{2\tt}+\frac{C}{8}\right)\norm{u-u^{k-1}}^2\leq 0,
\end{multline*}
where $u_\tau=\tau u+(1-\tau)u^{k-1}$, $\tau\in (0,1)$, and we assumed $\tt<2/(\xi+C/2)$. This concludes the proof.
\end{proof}
Next, we establish the energy stability property for the second order implicit and explicit schemes. 
\begin{theorem}[Energy stability estimates for the second order scheme]
For the solutions $u^k, u_m^k\in\mathcal{K}$ of of~\eqref{eq:energy_second_order_im}  and~\eqref{eq:min_problem_fi},  respectively, under the conditions of Proposition~\ref{prop:energy_comparison_upperbd} the following holds true
\begin{equation}
     \Jim(u^k)\leq \Jim(u_m^k)\leq \Jim(u_{m-1}^k)
    \leq  \Jim(u^{k-1})=\E(u^{k-1}).\label{energy_comp2}
\end{equation}
Then, for the second order implicit and explicit schemes we have the following energy stability estimates for $k=1,\dots,K$:
\begin{equation}
    \E(u^k)\leq \Jim(u^k)\leq \E(u^{k-1}),\label{eq:energy_stability2}
\end{equation}
where $u^k$ is the solution of~\eqref{eq:energy_second_order_im} or~\eqref{eq:energy_second_order_ex}, respectively. 
\end{theorem}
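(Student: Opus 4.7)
The plan is to derive the chain~\eqref{energy_comp2} from a majorization-minimization structure relating $\Jim$ and the iteration functional $j_k[\zeta]$, and then to read off~\eqref{eq:energy_stability2} by combining~\eqref{energy_comp2} with Proposition~\ref{prop:energy_comparison_upperbd}. The pivotal observation I will exploit is that $j_k[\zeta](u)$ serves as a global upper majorant of $\Jim(u)$ in the $u$ variable, touching it exactly on the diagonal $\zeta=u$. Once that is established, the chain unravels from three elementary ingredients: the minimization property defining $u_m^k$, the diagonal identity, and the majorization bound.

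Before invoking those inputs, I will verify two bookkeeping identities by direct substitution. First, $\Jim(u^{k-1})=\E(u^{k-1})$: plugging $u=u^{k-1}$ into $\Jim$ zeros out the quadratic term $\tfrac{1}{2\tt}\norm{u-u^{k-1}}^2$ together with both linear increment terms, and the remainder collapses to $g(u^{k-1})+f(u^{k-1})+\int_\Omega\psi(u^{k-1})\,\d x$, using that $\Psi(u^{k-1})=\psi(u^{k-1})$ in both the obstacle and regular/logarithmic cases. Second, the diagonal identity $j_k[\zeta](\zeta)=\Jim(\zeta)$: setting the parameter equal to the argument turns $\tfrac12 f(\zeta)$ back into $\tfrac12 f(u)$ and annihilates the correction $\tfrac12(\nabla f(\zeta),u-\zeta)$. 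Next I will establish the majorization: the difference $j_k[\zeta](u)-\Jim(u)$ reduces algebraically to $\tfrac12\bigl[f(\zeta)-f(u)+(\nabla f(\zeta),u-\zeta)\bigr]$, which is nonnegative because $-f$ is convex, so~\eqref{eq:convexity} applied to $-f$ delivers the inequality. The $\Psi$ term appears identically in $\Jim$ and $j_k[\zeta]$, so it cancels in this difference and no case split on the choice of potential is needed at this step.

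With these pieces in hand, the chain~\eqref{energy_comp2} follows from the composition $\Jim(u_m^k)\leq j_k[u_{m-1}^k](u_m^k)\leq j_k[u_{m-1}^k](u_{m-1}^k)=\Jim(u_{m-1}^k)$, where the first inequality is majorization, the second is the defining minimization $u_m^k=\argmin j_k[u_{m-1}^k]$, and the third is the diagonal identity. Iterating from $u_0^k=u^{k-1}$ gives $\Jim(u_m^k)\leq\cdots\leq\Jim(u_0^k)=\E(u^{k-1})$, and $\Jim(u^k)\leq\Jim(u_m^k)$ follows from $u^k=\argmin\Jim$. For the implicit scheme, combining~\eqref{energy_comp2} with Proposition~\ref{prop:energy_comparison_upperbd} then yields $\E(u^k)\leq\Jim(u^k)\leq\E(u^{k-1})$. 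For the explicit scheme~\eqref{eq:energy_second_order_ex}, I will observe that its $u^k$ is precisely the second fixed-point iterate $u_2^k$ initialized at $u^{k-1}$, so the two-step specialization of~\eqref{energy_comp2} still gives $\Jim(u^k)\leq\Jim(u_1^k)\leq\Jim(u^{k-1})=\E(u^{k-1})$, and Proposition~\ref{prop:energy_comparison_upperbd} closes the bound from below. The main conceptual hurdle is identifying the MM structure linking $\Jim$ and $j_k[\zeta]$; once articulated, every inequality falls out by direct substitution or concavity of $f$.
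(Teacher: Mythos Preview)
Your proposal is correct and follows essentially the same route as the paper. The only cosmetic difference is that you phrase the key step as a majorization--minimization argument, identifying $j_k[\zeta](u)-\Jim(u)=\tfrac12[f(\zeta)-f(u)+(\nabla f(\zeta),u-\zeta)]\ge 0$ via concavity of $f$, whereas the paper computes this remainder explicitly as $\tfrac14\norm{u-\zeta}_\gamma^2$ and then evaluates at $u=u_{m-1}^k$; the two are algebraically identical, and the subsequent chain and appeal to Proposition~\ref{prop:energy_comparison_upperbd} match exactly.
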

\begin{proof}
      We first prove~\eqref{energy_comp2}. The equality $\E_k(u^{k-1})= \Jim(u^{k-1}) $ and inequality $\Jim(u)\leq\Jim(u_m^k)$ follow directly from the definition of $\Jim(u)$, $\E(u)$, and fact that $u^k$ is the solution of~\eqref{eq:energy_second_order_im}. Next, we note that 
     \begin{multline}
           j_k[\zeta](u)=\Jim(u)+\frac14\norm{\zeta}^2_\gamma-\frac14\norm{u^{k-1}}^2_\gamma-\frac12\left(\gamma*\zeta,u-u^{k-1}\right)\\
    =\Jim(u)+\frac14\norm{u-\zeta}^2_\gamma,\label{eq:proof_jk_relation}
     \end{multline}
 then since $u_m^k$ solves the minimization problem~\eqref{eq:min_problem_fi} it follows that
 \begin{equation*}
     j_k[u^{k}_{m-1}](u_m^k)\leq j_k[u_{m-1}^k](u),\quad\forall {u\in\mathcal{K}}.
 \end{equation*}
From~\eqref{eq:proof_jk_relation} this implies that 
 \begin{equation*}
     \Jim(u_m^k)\leq\Jim(u)+\frac14\norm{u-u_{m-1}^k}_\gamma^2,\quad\forall {u\in\mathcal{K}}.
 \end{equation*}
 Then, taking $m=1$ and setting $u=u_0^k=u^{k-1}$ leads to
 \begin{equation*}
     \Jim(u_1^k)\leq\Jim(u^{k-1})=\E(u^{k-1}),
 \end{equation*}
 and the remaining inequalities in~\eqref{energy_comp2} follow directly by an induction argument.
Then, by combining~\eqref{energy_comp2} and~\eqref{eq:energy_comparison_upperbd} we obtain~\eqref{eq:energy_stability2}, which concludes the proof. 
\end{proof}

Next we derive the convergence result of the fixed point iteration. 
    \begin{proposition}[Convergence of the fixed point method]
        Let $u^k$ be the solution of~\eqref{eq:second_order_im_tk_obs} or~\eqref{eq:second_order_im_tk_reglog}, $k=1,\dots,K$. Then, for $\tt<2/\cpot$ the solution algorithm (Algorithm~\ref{alg:fixed_point_iteration}) converges linearly with respect to the number of fixed point iterations $m\in\mathbb{N}$:
        \begin{equation}
\norm{u^k-u^k_m}\leq \left(\frac{\cker}{2\lambda}\right)^m\norm{u^k-u^{k-1}},\quad k=1,\dots,K,
        \end{equation}
        with $\xi=\cker-\cpot{\geq 0}$ and $\cker=\kernel*1$. 
    \end{proposition}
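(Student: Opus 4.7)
The plan is to exploit the proximal representation \eqref{eq:prox_m} satisfied by both $u^k_m$ and the exact fixed point $u^k$, and then chain together three standard ingredients: non-expansiveness of the proximal operator, Young's inequality for convolution, and the averaging in $u^{k/2}$.

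First I would observe that the limit $u^k$ of the fixed-point iteration satisfies the same proximal identity as $u^k_m$, but evaluated at $u^{k/2}=(u^k+u^{k-1})/2$ instead of $u^{k/2}_{m-1}=(u^k_{m-1}+u^{k-1})/2$. Crucially, in the argument of the prox the terms $(\lambda-\xi)u^{k-1}$ and $\tfrac12\psi^\star(u^{k-1})$ depend only on $u^{k-1}$ and therefore coincide in both expressions and cancel upon subtraction. This is what makes the argument uniform across obstacle, regular, and logarithmic potentials, since $\psi^\star(u^{k-1})$ is either zero (obstacle) or a fixed quantity (regular/logarithmic).

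Then I would use the fact that $\mathrm{prox}_{\frac{1}{2\lambda}\psi}$ is $1$-Lipschitz (non-expansive), a standard property of proximal maps of convex lower semi-continuous functions. This gives
\begin{equation*}
\norm{u^k-u^k_m}\leq \frac{1}{\lambda}\norm{\kernel*u^{k/2}-\kernel*u^{k/2}_{m-1}}.
\end{equation*}
Young's convolution inequality with the $L^1$ norm of $\kernel$ (which equals $\cker$ since $\kernel\geq 0$) yields
\begin{equation*}
\norm{\kernel*(u^{k/2}-u^{k/2}_{m-1})}\leq \cker\,\norm{u^{k/2}-u^{k/2}_{m-1}}=\frac{\cker}{2}\norm{u^k-u^k_{m-1}},
\end{equation*}
where the factor $1/2$ comes from the definition of $u^{k/2}$. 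Combining the two estimates gives the one-step contraction
\begin{equation*}
\norm{u^k-u^k_m}\leq \frac{\cker}{2\lambda}\norm{u^k-u^k_{m-1}}.
\end{equation*}

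A straightforward induction on $m$, together with the initialization $u^k_0=u^{k-1}$, then produces the claimed bound. Finally, to see that this is a genuine contraction under the stated condition I would check that $\cker/(2\lambda)<1$: since $2\lambda=2/\tt+\xi=2/\tt+\cker-\cpot$, the inequality reduces to $\cpot<2/\tt$, i.e. $\tt<2/\cpot$, which is exactly the hypothesis. The only subtlety worth flagging is the uniform treatment of the term $\partial\psi^\star$ across the three potentials; once one checks that the $u^{k-1}$-part drops out of the difference, the rest is mechanical and the main obstacle disappears.
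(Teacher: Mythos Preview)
Your proposal is correct and follows essentially the same route as the paper: proximal representation, non-expansiveness of the prox map, Young's convolution inequality with $\norm{\kernel}_{L^1}=\cker$, and the factor $\tfrac12$ from the averaging $u^{k/2}$. The only structural difference is that the paper first proves the contraction between \emph{consecutive} iterates $\norm{u^k_m-u^k_{m-1}}\le\frac{\cker}{2\lambda}\norm{u^k_{m-1}-u^k_{m-2}}$ and invokes the Banach fixed-point theorem to secure existence and uniqueness of $u^k$ before passing to the error bound, whereas you take $u^k$ as given by the statement and bound $\norm{u^k-u^k_m}$ directly; both lead to the same final inequality.
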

    \begin{proof}
       Let, $q^k_m=\kernel*u_{m-1}^{k/2}+(\lambda-\xi)u^{k-1}$, $\lambda=1/\tt+\xi/2$. Then, using~\eqref{eq:prox_m}, Young's inequality for convolutions, and the fact that the proximal operator is nonexpansive, i.e., Lipschitz continuous with constant $1$, we estimate 
       \begin{multline*}
           \norm{u_m^k-u_{m-1}^k}=\norm{\proxl \left(\frac{q_m^k}{\lambda}\right)-\proxl\left(\frac{q_{m-1}^k}{\lambda}\right)}\\
           \leq\frac{1}{\lambda}\norm{q_{m}^k-q_{m-1}^k}=\frac{1}{2\lambda}\norm{\kernel*\left(u_{m-1}^{k/2}-u_{m-2}^{k/2}\right)}\leq\frac{1}{2\lambda}{\norm{\kernel}_{L^1(\D)}}\norm{u_{m-1}^{k}-u_{m-2}^k}\\
           = \frac{\cker}{2/\tt+\xi} \norm{u_{m-1}^k-u_{m-2}^k}.
       \end{multline*}
    Thus, for $\cker/(2\lambda)={\cker}/({2/\tt+\xi})<1$ that holds for $\tt<2/\cpot$, we obtain that the proximal operator is a contraction mapping and by a Banach fixed point theorem there exists a unique $u^k$ such that 
    \begin{multline*}
        \norm{u^k-u_{m+1}^k}\leq \frac{\cker}{2/\tt+\xi}\norm{u^k-u_{m}^k}\leq \left(\frac{\cker}{2/\tt+\xi}\right)^2\norm{u^k-u_{m-1}^k}\\
        \leq 
        \left(\frac{\cker}{2/\tt+\xi}\right)^{m+1}\norm{u^k-u^{k-1}},
    \end{multline*}
    which concludes the proof. 
  \end{proof}

\section{Spatial discretization}\label{sec:spatial_disc}
In this section we present a spatial discretization of~\eqref{eq:general_xi} based on the spectral collocation method~\cite{DuJuLiQiao2018,shen2011book,trefethen2000spectral} with the discrete Fourier transform for the evaluation of the convolution. 
For simplicity of presentation we focus on a model problem posed in a two-dimensional domain, $\Omega = (-X,X)\times(-Y,Y)\subset \bbR^2$ and supplemented with periodic boundary conditions. The results can be easily generalized and remains valid in one- and thre- dimensional settings. 

We define the discretization of $\Omega$ by a collocation points which are equidistantly spaced in each dimension:
\begin{align*}
    \Omega_h = \left\{(x_i,y_j) : x_i=-X+ih_x, y_j=-Y+jh_y,\quad 
    0\le i\le N_x-1, ~0\le j\le N_y-1\right\},
\end{align*}
with uniform mesh sizes $h_x = 2X/N_x$, $h_y=2Y/N_y$,
where $N_x$, $N_y$ are even numbers, and we set $h = \max\{h_x,h_y\}$. We introduce index sets for spatial and frequency domains:
\begin{eqnarray*}
    \Sh &=& \left\{(i,j)\in\bbZ^2\colon 0\le i\le N_x-1,\quad 0\le j\le N_y-1\right\},\\
    \Shw &=& \left\{(l,m)\in\bbZ^2\colon -\frac{N_x}{2}+1\le l\le \frac{N_x}{2},~-\frac{N_y}{2}+1\le m\le \frac{N_y}{2}\right\}.
\end{eqnarray*}
For all grid periodic functions $u,v\colon\Omega_h\to\mathbb{R}$ we define  
the discrete $L^2$ inner product $\innerd{\cdot,\cdot}$ and norm $\normd{\cdot}$:
\begin{equation*}
    \innerd{u,v}=h_xh_y\sum_{(i,j)\in\Sh}u_{ij}v_{ij},\quad \normd{u}=\sqrt{\innerd{u,u}},
\end{equation*}
where and in what follows we use $u_{ij}=u(x_i,y_j)$. We also define a discrete Fourier transform
\begin{eqnarray}
    \widehat{u}_{lm}=(\mathcal{F}_Nu)_{lm}=\sum_{(i,j)\in\Sh}{u_{ij}}e^{- \ii\pi\left(\frac{lx_i}{X} + \frac{my_j}{Y}\right)},\quad(l,m)\in\Shw,\label{f_tranform}
\end{eqnarray}
and the corresponding inverse discrete Fourier transform 
\begin{eqnarray}
   {u_{ij}}=\left(\mathcal{F}_N^{-1}\widehat{u}\right)_{ij}=\frac{1}{N_xN_y}\sum_{(l,m)\in \widehat{S}_h}\widehat{u}_{lm}e^{ \ii\pi\left(\frac{lx_i}{X} + \frac{my_j}{Y}\right)},\quad (i,j)\in\Sh.\label{f_inverse}
\end{eqnarray}
Here, by $\astcirc$ we denote a discrete circular convolution for the periodic functions $u,v$:
\begin{equation*}
    (u\astcirc v)_{ij} = h_xh_y\sum_{(p,q)\in\Sh}u_{i-p,j-q}v_{pq},~~~~~(i,j)\in\Sh.\label{convolution_def}
\end{equation*}

\subsection*{Evaluation of the convolution}
We employ discrete Fourier transform using FFT to efficiently compute the discrete convolution in $\mathcal{O}(N\log N)$ complexity, with $N=N_xN_y$. More specifically, taking into account that
\begin{eqnarray*}
    \widehat{(u\astcirc v)}_{lm} = h_xh_y\widehat{u}_{lm}\widehat{v}_{lm},\quad (l,m)\in\Shw,
\end{eqnarray*}
we have that
    \begin{align*}
        (u \astcirc v)_{ij}=\left(\mathcal{F}_N^{-1}\widehat{(u\astcirc v)}\right)_{ij}=
        \frac{h_xh_y}{N_xN_y}\sum_{(l,m)\in \Shw}\widehat{u}_{lm}\widehat{v}_{lm}e^{ \ii\pi\left(\frac{lx_i}{X} + \frac{my_j}{Y}\right)},~~~~~(i,j)\in S_h.\label{discrete_conv}
    \end{align*}
Now, by taking $u\equiv 1$ and $v=\kernel_N$ in the above we can directly compute $\xi_N=\cker^N-\cpot$ with 
\begin{eqnarray*}
    \cker^N=\kernel_N\astcirc 1 = h_xh_y \widehat{\kernel_N}_{00}=h_xh_y\sum_{(i,j)\in\Sh}\kernel_{N,ij}.
\end{eqnarray*}
Furthermore, the discrete version $\mathcal{L}_N$ of the operator $\calL$ in~\eqref{nonlocal_op} reads as $\calL_N u=\cker^N u-\kernel_N\astcirc u$ for $u\colon\D_h\to\mathbb{R}$.

\subsection*{Discrete representaiton formula}

Now, we present a full discretization of the nonlocal problem~\eqref{eq:general_xi} and the corresponding discretization of the representation formula. For simplicity we focus on the first order time-stepping scheme~\eqref{eq:first_order_tk_intro} that can be directly extended to the second-order scheme.

We denote $ V_N:={\rm Re}\left({\rm span}_{\mathbb{C}}\{e^{\ii\pi\left(\frac{ l x}{X}+\frac{ m y}{Y}\right)}\colon (l,m)\in\Shw\}\right)$ and construct an approximate solution to~\eqref{eq:first_order_tk_intro}, $u_N^k \in\mathcal{K}_N\subset V_N$, 
with $u_N(0)=I_N(u_0)$, where $I_N$ is a nodal interpolant and we define
\begin{equation*}
    \mathcal{K}_N:=\{v_N\in V_N\colon \psi(v_N(x_i,y_j))<+\infty,\quad i,j\in\Sh\}.
\end{equation*}
We also introduce a Lagrangian basis associated with the grid notes such that
\begin{align*}
u_N^k(x,y)=\frac{1}{N_xN_y}\sum_{(l,m)\in\Shw}\hat{u}_{lm}^ke^{\ii\left(\frac{\pi l x}{X}+\frac{\pi my}{Y}\right)}=
    \sum_{(i,j)\in\Sh}u_N^k(x_i,y_j)\phi_i(x)\phi_j(y),
\end{align*}
with $\phi_j(x_i)=\delta_{ij}$, $\forall i,j\in\Sh$. It is known that $\phi_j(x)=\frac{1}{N_x}\sin \left(N_x\pi\frac{x-x_j}{X}\right)\cot \left(\pi\frac{x-x_j}{X}\right)$. 
Now, for $k=1,\dots,K$ introducing a discrete Lagrange multiplier $\mu_N^k\colon\D_h\to\bbR$, $\mu_N^k\in\partial\psi(u_N^k)$ we can derive a fully discrete version of~\eqref{eq:first_order_tk_intro}: Find $u_N^k\in\mathcal{K}_N$ such that
\begin{align*}
   \left(\frac{1}{\tt}+\xi_N\right)u_{ij}^{k}+\mu_{ij}^k=\frac{1}{\tt}u^{k-1}_{ij}+\left(\kernel_N\astcirc u^{k-1}\right)_{ij},\quad i,j\in \Sh,
\end{align*}
where $u_{ij}=u_N(x_i,y_j)$ and $\mu_{ij}^k=(u_{ij}^k)^3$, $\mu_{ij}^k = \frac{\thetalog}{2}\left(\ln(1+u_{ij}^k)-\ln(1-u_{ij}^k)\right) $ and $\mu_{ij}^k={\mu}_{+,ij}^k-{\mu}_{-,ij}^k$, ${\mu}_{\pm,ij}^k\geq 0$ for the regular, logarithmic and obstacle potentials, respectively.
Invoking the properties of the discrete Lagrange multipliers this reduces to the following pointwise relation:
\begin{eqnarray*}
    u^k_{ij} = \proxlN\left(\frac{1}{\lambda_N}\left[(\gamma\astcirc u^{k-1})_{ij} +\frac{1}{\tt}u^{k-1}_{ij}\right]\right),\quad \lambda_N=\xi_N+1/\tt>0.
\end{eqnarray*}
We can also show that the above discrete problem is the Karush-Kuhn-Tucker (KKT) system of the following discrete minimization problem of finding $U=(u_{ij})_{ij\in\Sh}\in\mathbb{R}^N$ that minimizes
\begin{multline*}
    \min_{U\in\bbR^{N}} \J_N^k(U):=h_xh_y\sum_{(i,j)\in\Sh}\left[\frac{\xi_N}{2}u^2_{ij}-\frac{1}{2}(\kernel \astcirc u^{k-1})_{ij}\cdot u_{ij}^{k-1}\right.\\
    \left.-(\kernel \astcirc u^{k-1})_{ij}\cdot(u_{ij}-u_{ij}^{k-1})\right.\\
    \left. +\frac{1}{2\tt}(u_{ij}-u_{ij}^{k-1})^2+\frac{\cpot}{2}|\D|+\psi(u_{ij})\right].
   % \label{energy_tk}
\end{multline*}
Following the same lines as in the proof of Proposition~\ref{prop:energy_stab1} we derive the discrete energy stability estimates: 
\begin{eqnarray*}
    \E_N(U^k)\leq\J^k_N(U^k)\leq \J^k_N(U^{k-1})=\E_N(U^{k-1}),
\end{eqnarray*} 
where 
\begin{equation*}
    \E_N(U)=h_xh_y\sum_{(i,j)\in\Sh}\left[\frac{\xi_N}{2}u^2_{ij}-\frac{1}{2}(\kernel \astcirc u^{k-1})_{ij}\cdot u_{ij}^{k-1}+\frac{\cpot}{2}|\D|+\psi(u_{ij})\right].
    \label{eq:GL_disc}
\end{equation*}

\section{Numerical experiments}\label{sec:numerics}
In this section, we present several numerical experiments in one-, two-, and three-dimensional settings. For numerical simulations, we consider a nonlocal kernel $\gamma$ to be a scaled Gaussian~\cite{DuJuLiQiao2018}, which can be periodically extended to $\bbR^n$ with respect to $\Omega$ 
\begin{eqnarray}
    \kernel(\x-\xx)=\kerneld(|z|) = \frac{4\varepsilon^2}{\pi^{n/2}\delta^{n+2}}e^{-\frac{|z|^2}{\delta^2}},~~z\in\D,~\delta>0, \label{kernel}
\end{eqnarray}
where the scaling is chosen such that $\int_{\mathbb{R}^n}\kerneld(\x)|\x|^2\d\x=2n$ and the parameter $\varepsilon>0$ is introduced to resemble the local operator $Lu\approx-\varepsilon^2\upDelta u$ for vanishing nonlocal interations, $\delta\to 0$. Here, $\delta>0$ is a nonlocal interaction parameter that defines an approximate of nonlocal interactions. With the choice of the kernel~\eqref{kernel} we obtain $\cker={4\varepsilon^2}/{\delta^2}$. 

For the spatial discretization we employ the Fourier spectral collocation method on a uniform mesh with $N$ collocation points, with $N$ specified for each case separately.  For the temporal discretization we consider time-stepping schemes described in Section~\ref{sec:temporal_disc}. 

\subsection{Convergence rates in time}
 We investigate numerically the performance of the first and second order time-stepping schemes. We consider the numerical simulation of~\eqref{eq:general} using various time step sizes $\Delta t = 0.005 \times 2^{-k},~~k = 0, 1, 2, \dots, 8$ and compute the error of the corresponding numerical solution with the benchmark solution, which is obtained by the second order implicit scheme with $\Delta t = 0.005\times 2^{-10}$, and $TOL = 10^{-15}$ in Algorithm~\ref{alg:fixed_point_iteration} for obstacle and regular potentials. For logarithmic potential, we set $TOL=10^{-10}$.

 \subsection*{Example 1}\label{ex:convergence}
We choose $\D=(-1,1)^n$, $n=2,3$ which is discretized with $N=512^2$, and $100^3$ for $n=2$ and $3$, respectively, for regular and obstacle potentials, and $N=256^2$ for logarithmic potential with $\thetalog=0.2$. We set $\cpot=1$, $T=0.2$ and initial conditions
\begin{align*}
    u_0({\bf{x}}) &= \prod_{i=1}^n \cos(\pi x_i), \quad {\bf{x}}=(x_1,\dots,x_n)\in\bbR^n.
\end{align*}
On Figure~\ref{convergenceRates}, we plot the $L^2$-error of the numerical solution at a final time $T=0.2$ for different time steps and for different values of $\varepsilon$ and $\delta$.  More specifically, we consider $\xi=0$, $\xi=1.56$, $\xi = 3$ and $\xi=8$,  corresponding to 
$(\varepsilon,\delta)$ values of $(0.05, 0.1)$, $(0.08,0.1)$, $(0.1,0.1)$ and $(0.18,0.12)$, respectively. 
We observe the expected first and second order convergence rates for the first and second order schemes, which are also independent of $\delta$ and $\varepsilon$.
\begin{figure}[ht]
    \centering
    \begin{subfigure}[h]{0.49\textwidth}
        \centering
        \includegraphics[width=\textwidth]{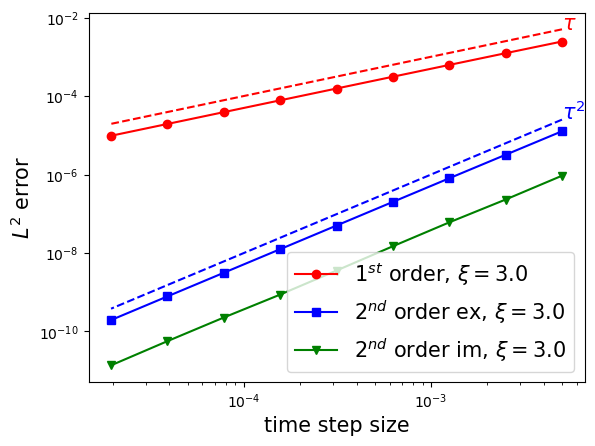}
        \caption{2D case with $\varepsilon = 0.1$ and $\delta = 0.1$}
        \label{error2D_1}
    \end{subfigure}
    \begin{subfigure}[h]{0.49\textwidth}
        \centering
        \includegraphics[width=\textwidth]{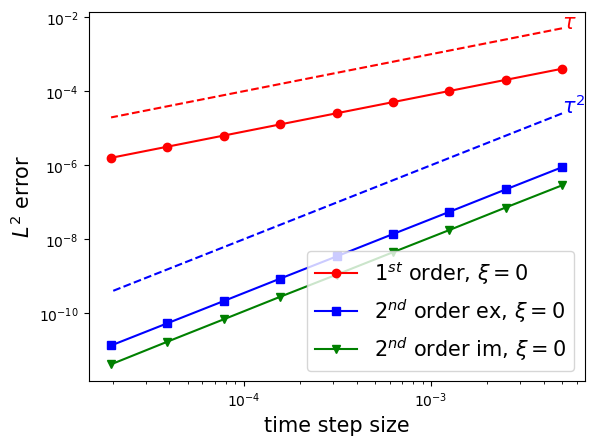}
        \caption{2D case with $\varepsilon = 0.05$ and $\delta = 0.1$}
        \label{error2D_2}
    \end{subfigure}
    \begin{subfigure}[h]{0.49\textwidth}
        \centering
        \includegraphics[width=\textwidth]{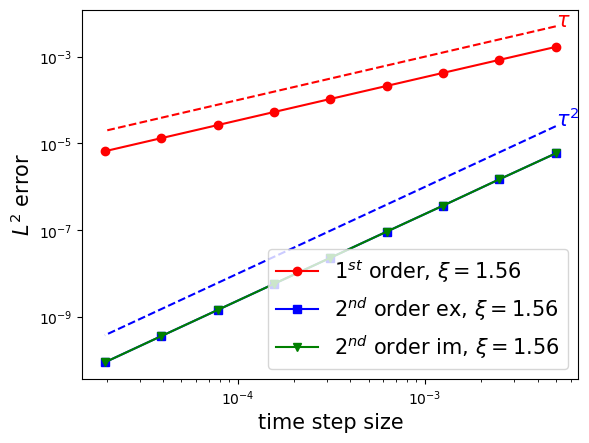}
        \caption{3D case with $\varepsilon=0.08$ and $\delta=0.1$}
        \label{error3D_1}
    \end{subfigure}
    \begin{subfigure}[h]{0.49\textwidth}
        \centering
        \includegraphics[width=\textwidth]{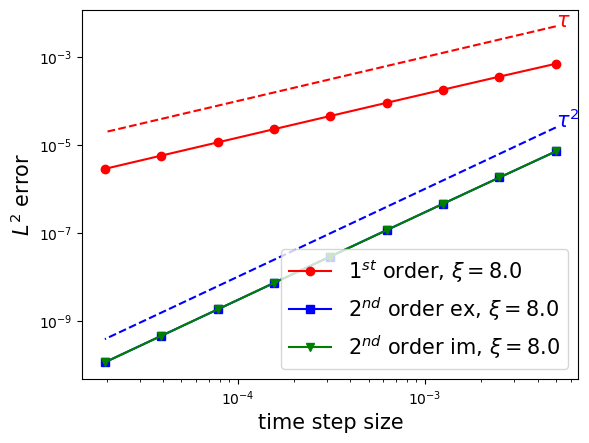}
        \caption{3D case with $\varepsilon=0.18$ and $\delta=0.12$}
        \label{error3D_2}
    \end{subfigure}
    \caption{Convergence rates in time of  the first order, second order explicit, and second order implicit schemes for two- and three-dimensional test cases in Example~1 for obstacle potential.}
    \label{convergenceRates}
\end{figure}

Furthermore, on Figure~\ref{convergenceRates2}, we plot the $L^2$-error of the numerical solution at a final time for both regular and logarithmic potentials. We consider the cases  $\xi=0$ and $\xi=3$,  and set $n=2$.  While we  use proximal operator representation formula to evaluate the solution (first and second columns), we also test the convergence behaviour for the regular potential using the fixed point iteration (third column).  In both cases, we observe expected first and second order convergence rates independent of $\delta$ and $\varepsilon$. In addition, we notice a flattening of the error for small $\tt$ in the case of regular potential solved using proximal operator approach, which is explained by the cancellation effect in the representation formula~\eqref{eq:prox_formula_reg_order1}.

\begin{figure}[ht]
    \begin{subfigure}[h]{0.32\textwidth}
        \centering
        \includegraphics[width=\textwidth]{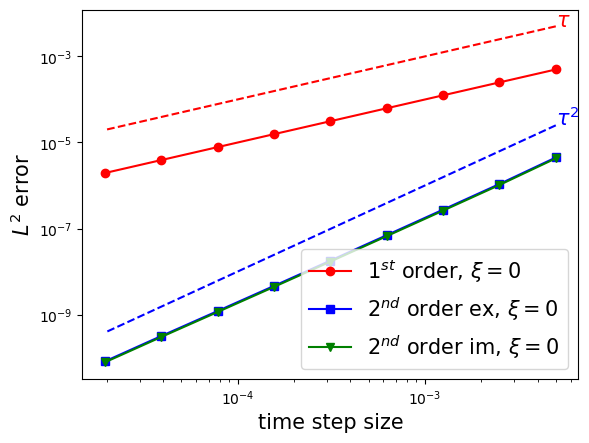}
    \end{subfigure}
\begin{subfigure}[h]{0.32\textwidth}
        \centering
        \includegraphics[width=\textwidth]{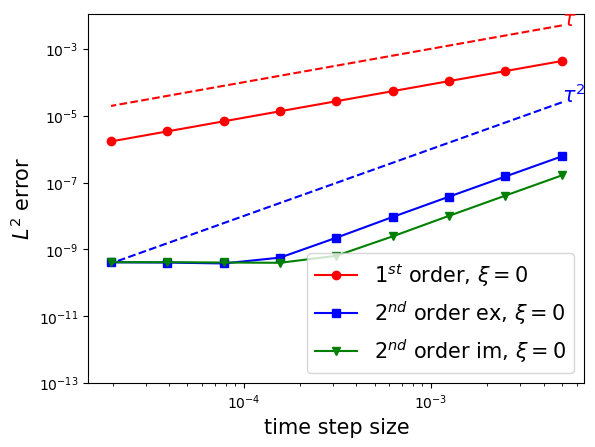}
    \end{subfigure}
\begin{subfigure}[h]{0.32\textwidth}
        \centering
        \includegraphics[width=\textwidth]{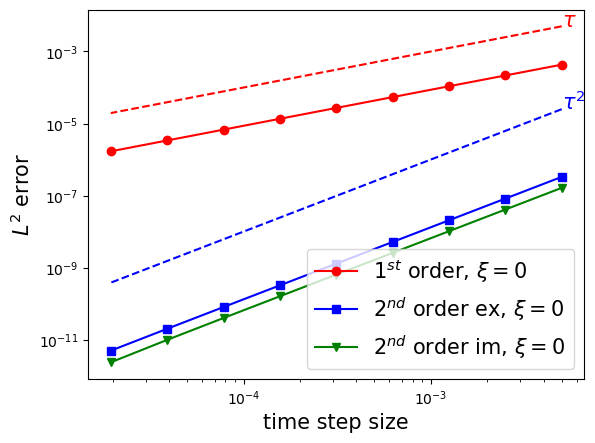}
    \end{subfigure}
    \begin{subfigure}[h]{0.32\textwidth}
        \centering
        \includegraphics[width=\textwidth]{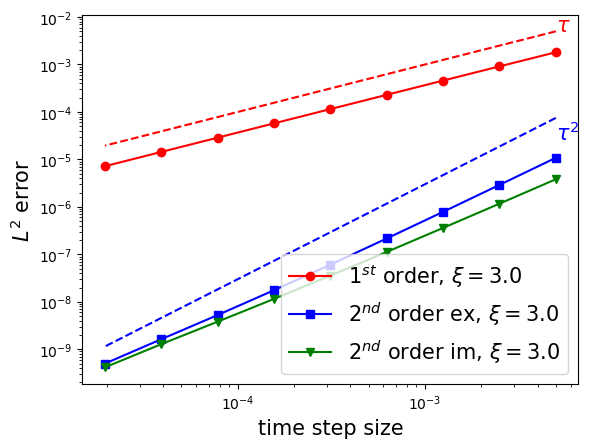}
    \end{subfigure}
    \begin{subfigure}[h]{0.32\textwidth}
        \centering
        \includegraphics[width=\textwidth]{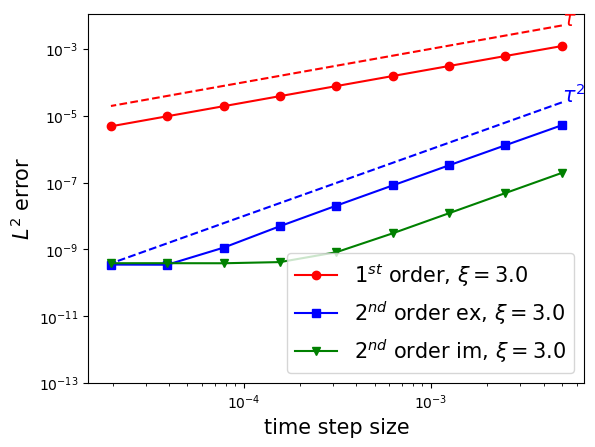}
    \end{subfigure}
    \begin{subfigure}[h]{0.32\textwidth}
        \centering
        \includegraphics[width=\textwidth]{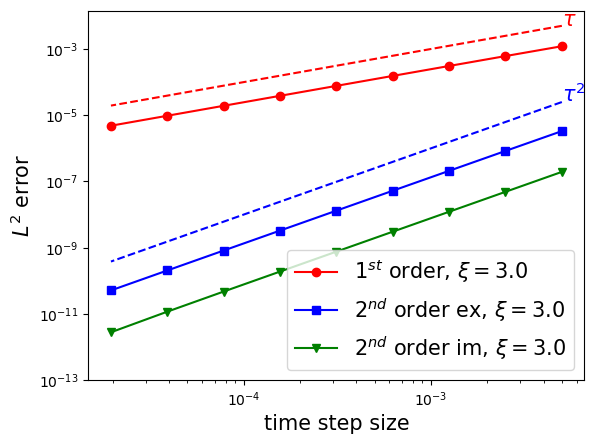}
    \end{subfigure}
    \caption{Convergence rates in time of the first and second order schemes in the settings of Example~1 for logarithmic potential with $\thetalog=0.2$ (first column), regular potential solved with proximal operator (second column), and regular potential solved with fixed-point iteration (third column).}
    \label{convergenceRates2}
\end{figure}

\subsection{Sharp interfaces}
Next, we investigate the sharpness of the interfaces of the the solution of~\eqref{eq:general} simulated in one- and two-dimensional domain for different values of $\varepsilon$ and $\delta$.  We recall that sharpness of interface is characterized by the values of $\xi$, with perfectly sharp interfaces attained at a steady-state for $\xi = 0$ and the choice of the obstacle potential. 

\subsection*{Example~2}\label{allen_cahn_1}
We set $\Omega = (-1,1)$, $\cpot= 1$, $N=1024$ and $\Delta t = 0.001$. We use the first order semi-implicit scheme to simulate the solution of~\eqref{eq:general} till steady-states with the initial condition given by
$$u_0(x) = 0.5(\sin(\pi x) + \sin(2\pi x)).$$

The snapshots of the solutions of the model with the obstacle, regular and logarithmic potentials at different time instances ($t=10,\ 50$, and $250$) are depicted on Figure~\ref{allen_cahn1}. The solutions are plotted for varying nonlocal interaction parameters $\delta = 0.1, 0.16,\ 0.1999$ and fixed $\varepsilon=0.1$, which corresponds to $\xi = 3.0,\ 0.5625,\ 0.001$ respectively. The figures at the first and second rows correspond to the solutions of the model with obstacle and regular potentials, whereas the third and fourth rows represent solutions using logarithmic potentials with $\thetalog=0.01$ and $\thetalog=0.2$, respectively.
In addition, on Figure~\ref{fig:AC_var_eps}, we plot a time evolution of the solution of the model~\eqref{eq:general} with obstacle potential for different values of $\varepsilon = 0.2, 0.15,$ and $0.1001$ with fixed $\delta = 0.2$, which correspond to $\xi=3.0,\ 1.25,\ 0.002$, respectively. 
We observe that the solution develops discontinuity in the interface profile for $\xi\approx 0$ as it approaches the steady-state. This is in agreement with the previously reported results for the obstacle~\cite{burkovska2023} and regular~\cite{DuYang2016} potentials. 
When using the obstacle potential and $\xi=0$, the solution exhibits sharp interfaces with only pure phases (Figure~\ref{allen_cahn1} and Figure~\ref{fig:AC_var_eps}). However, with the regular potential, although a discontinuity develops, it is not a jump discontinuity (the magnitude of the discontinuity being smaller than 2), and a small layer of the solution at the transient phase remains.
For the Flory-Huggins potential, the sharpness of the interface is influenced by the parameter $\thetalog$ with the sharpness interface achieved for a smaller value of $\theta$, since for $\thetalog\to 0$ the logarithmic potential approaches the obstacle potential. 
\begin{figure}[ht!]
    \centering
    \begin{subfigure}[h]{0.24\textwidth}
        \centering        \includegraphics[width=\textwidth]{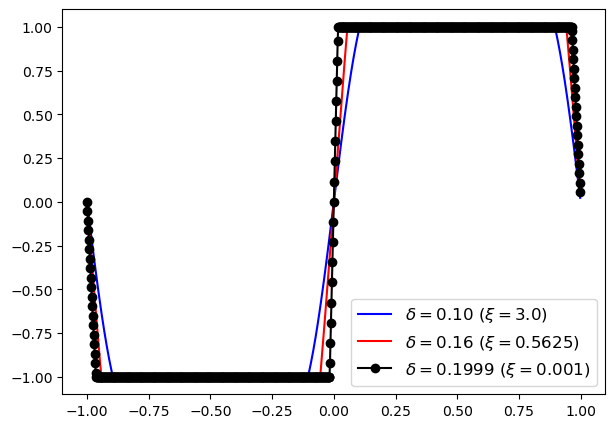}
    \end{subfigure}
    \begin{subfigure}[h]{0.24\textwidth}
        \centering        \includegraphics[width=\textwidth]{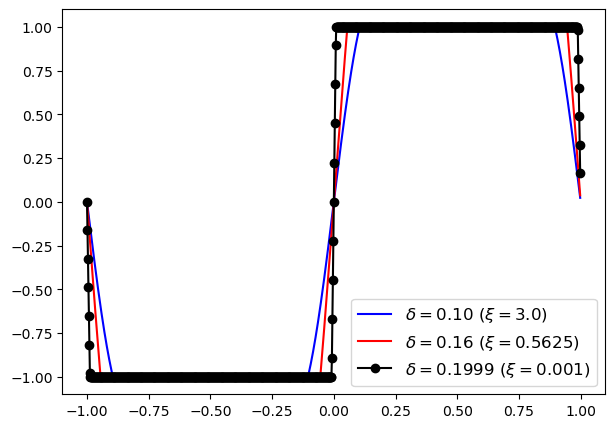}
    \end{subfigure}
    \begin{subfigure}[h]{0.24\textwidth}
        \centering        \includegraphics[width=\textwidth]{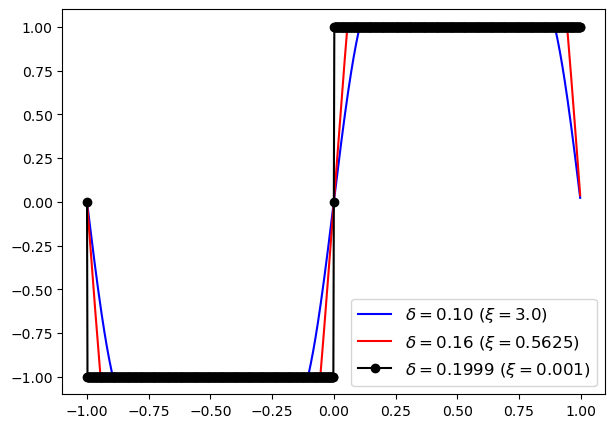}
    \end{subfigure}
    \begin{subfigure}[h]{0.24\textwidth}
        \centering        \includegraphics[width=\textwidth]{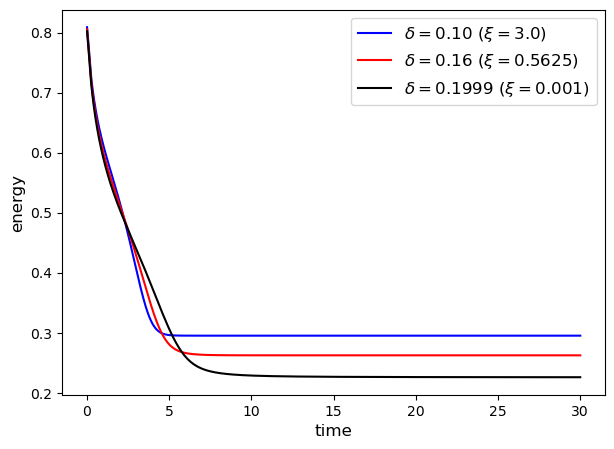}
    \end{subfigure}
    \begin{subfigure}[h]{0.24\textwidth}
        \centering        \includegraphics[width=\textwidth]{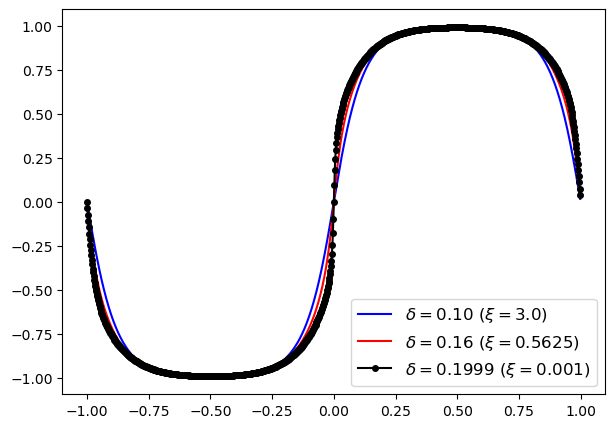}
    \end{subfigure}
    \begin{subfigure}[h]{0.24\textwidth}
        \centering        \includegraphics[width=\textwidth]{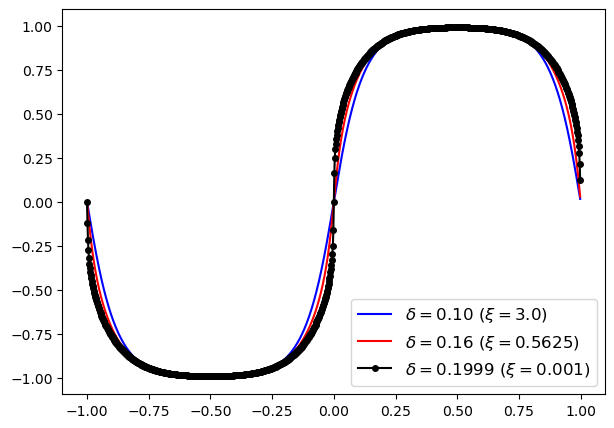}
    \end{subfigure}
    \begin{subfigure}[h]{0.24\textwidth}
        \centering        \includegraphics[width=\textwidth]{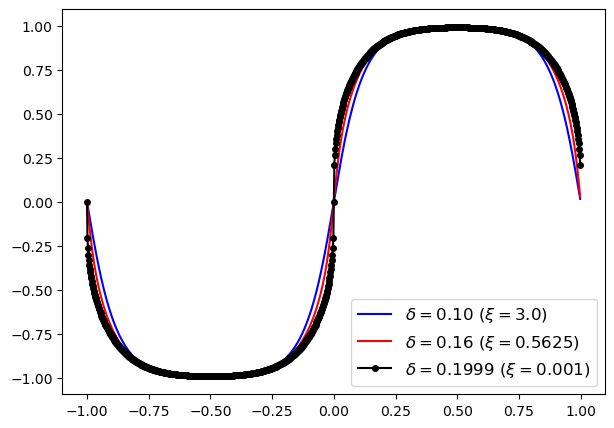}
        %\caption{}
    \end{subfigure}
    \begin{subfigure}[h]{0.24\textwidth}
        \centering        \includegraphics[width=\textwidth]{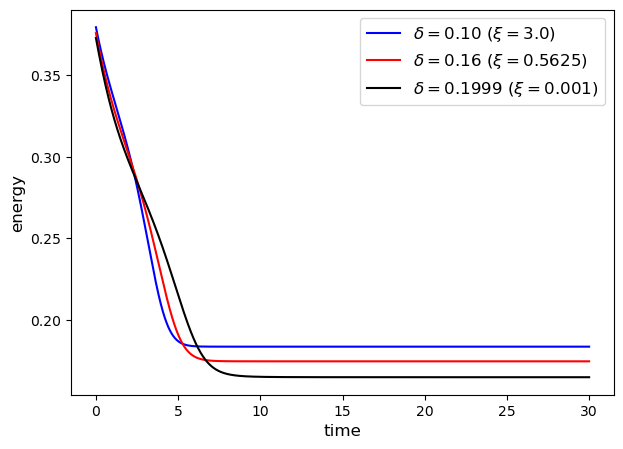}
        %\caption{}
    \end{subfigure}
    \begin{subfigure}[h]{0.24\textwidth}
        \centering        \includegraphics[width=\textwidth]{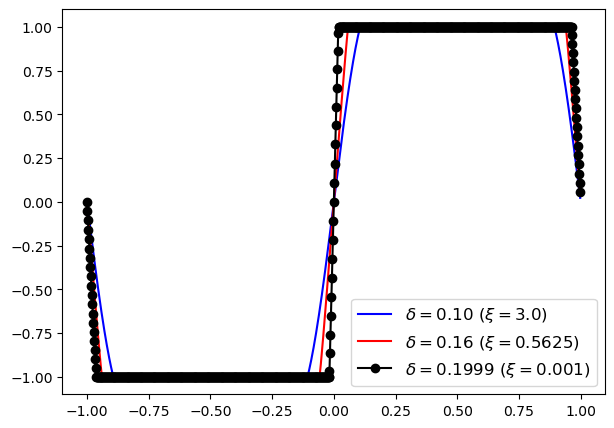}
    \end{subfigure}
    \begin{subfigure}[h]{0.24\textwidth}
        \centering        \includegraphics[width=\textwidth]{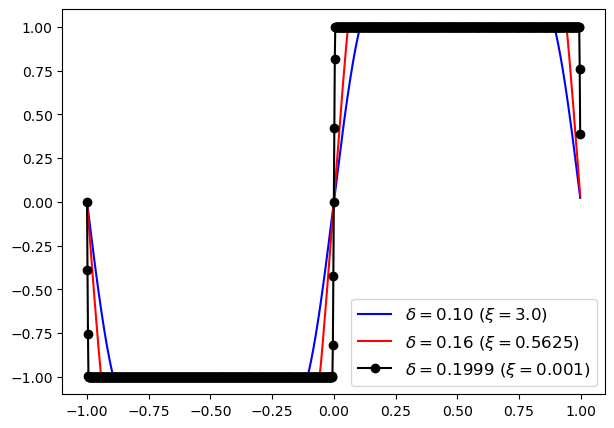}
    \end{subfigure}
    \begin{subfigure}[h]{0.24\textwidth}
        \centering        \includegraphics[width=\textwidth]{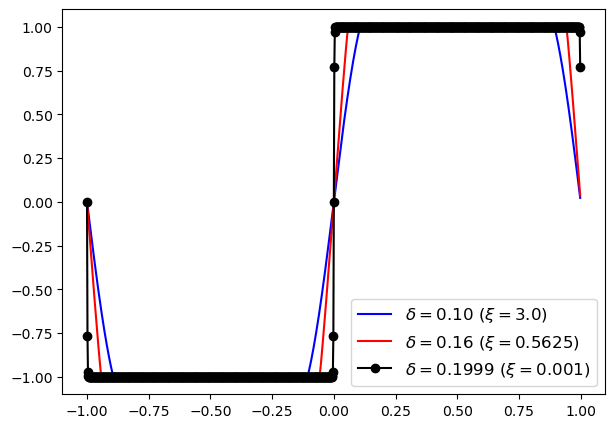}
    \end{subfigure}
    \begin{subfigure}[h]{0.24\textwidth}
        \centering        \includegraphics[width=\textwidth]{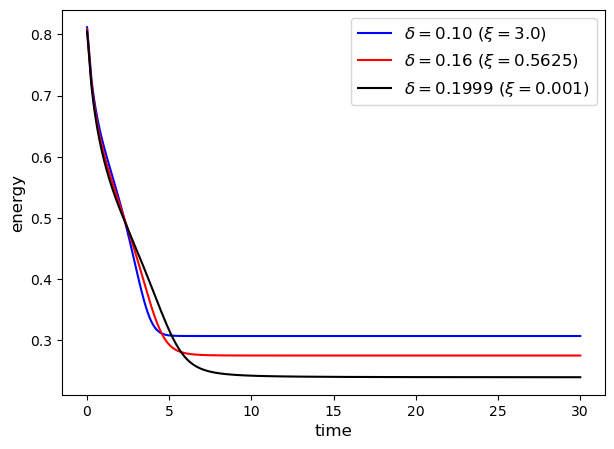}
    \end{subfigure}
    \begin{subfigure}[h]{0.24\textwidth}
        \centering        \includegraphics[width=\textwidth]{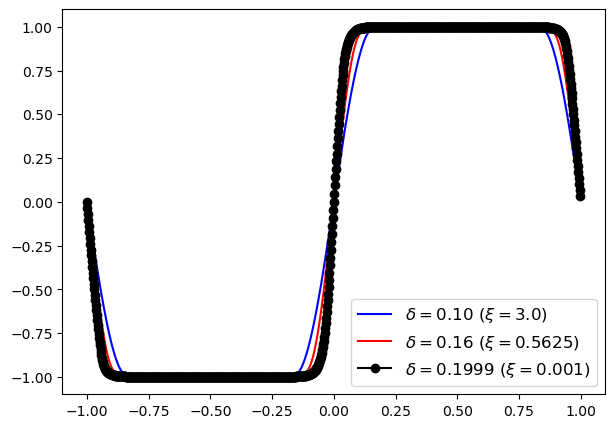}
    \end{subfigure}
    \begin{subfigure}[h]{0.24\textwidth}
        \centering        \includegraphics[width=\textwidth]{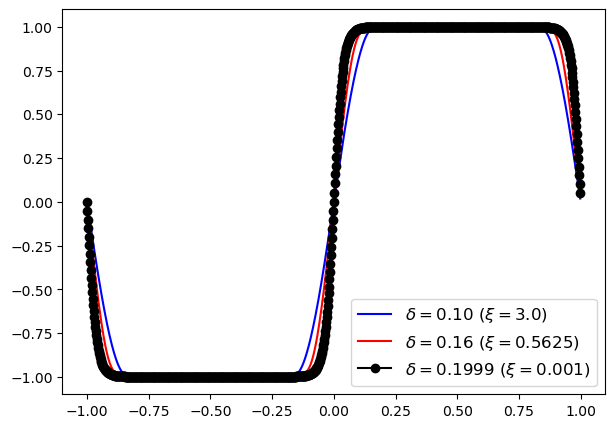}
    \end{subfigure}
    \begin{subfigure}[h]{0.24\textwidth}
        \centering        \includegraphics[width=\textwidth]{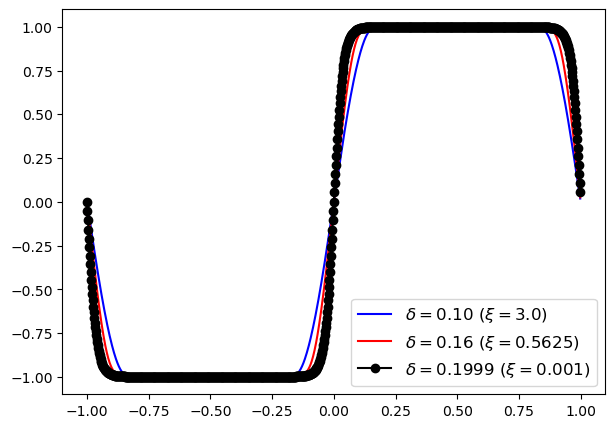}
    \end{subfigure}
    \begin{subfigure}[h]{0.24\textwidth}
        \centering        \includegraphics[width=\textwidth]{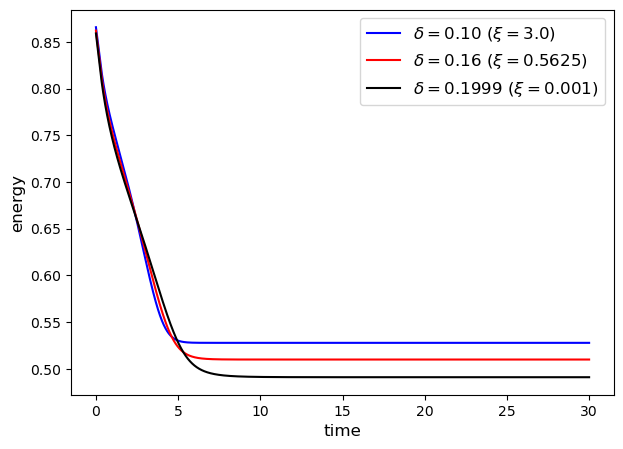}
    \end{subfigure}
    \caption{Time evolution of the solution of model~\eqref{eq:general} in the settings of Example~2 with obstacle potential (top), regular potential (middle), and logarithmic potential (bottom); and the corresponding energies for fixed $\varepsilon=0.1$ and different values of $\delta=0.1,\ 0.16$, and $0.1999$ corresponding to $\xi=3.0,\ 0.5625$ and $\xi=0.001$, respectively. From left to right: $t=10$, $20$ and $250.$}
    \label{allen_cahn1}
\end{figure}

\begin{figure}[ht]
    \begin{subfigure}[h]{0.24\textwidth}
        \centering        
        \includegraphics[width=\textwidth]{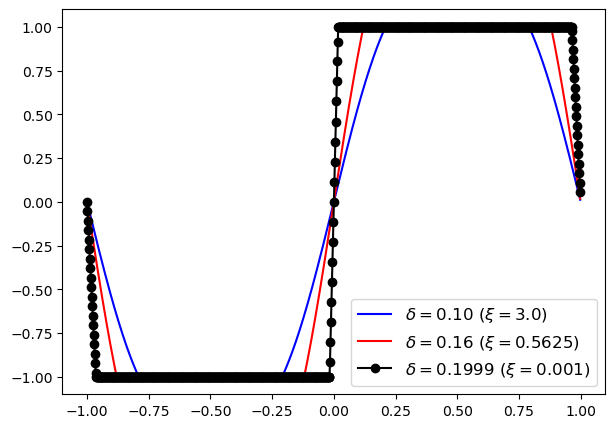}
    \end{subfigure}
    %\hspace{1cm}
    \begin{subfigure}[h]{0.24\textwidth}
        \centering        
        \includegraphics[width=\textwidth]{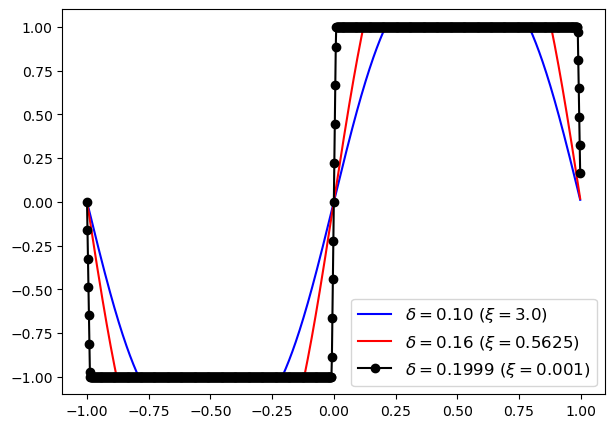}
    \end{subfigure}
    %\hspace{1cm}
    \begin{subfigure}[h]{0.24\textwidth}
        \centering        
        \includegraphics[width=\textwidth]{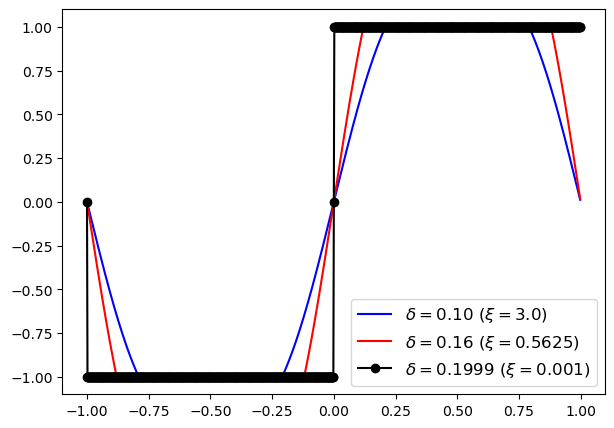}
    \end{subfigure}
    %\hspace{1cm}
    \begin{subfigure}[h]{0.24\textwidth}
        \centering        \includegraphics[width=\textwidth]{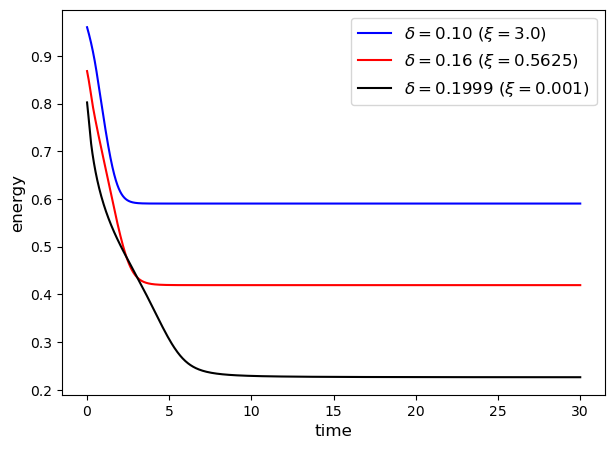}
    \end{subfigure}
    \caption{Time evolution of the solution of model~\eqref{eq:general} in the settings of Example~2 with the obstacle potential; and the corresponding energy for fixed $\delta=0.2$ and different values of $\varepsilon=0.2,\ 0.15$, and $0.1001$ corresponding to $\xi=3.0,\ 1.25$ and $\xi=0.002$, respectively. From left to right: $t=10$, $20$ and $250.$}
    \label{fig:AC_var_eps}
\end{figure}

\begin{remark}
    We note that the solution at $x=1$ is excluded for all cases due to the discretization of the periodic boundary conditions.
\end{remark}

We also illustrate the sharpness of the interface in two-dimensional setting.
\subsection*{Example~3}\label{sharp_2D}
    We set $\Omega = (-1,1)^2,$ and set $N = 512^2$, $\cpot= 1$, $\delta=0.0824$, $\varepsilon^2=0.0017$, corresponding to $\xi = 0.0015$, and $\Delta t = 0.005$. We consider an initial condition given by  $u_0(x,y)=\sin(\pi x)e^{-|y|}$ and adopt the first order semi-implicit scheme~\eqref{eq:first_order_tk} to simulate the phase separation until a steady-state.
Figure~\ref{sharp_interface_2D_2} shows the evolution of the phase separation at $t=1, 3$, and $50$ for obstacle potential (top), regular potential (middle), and logarithmic potential (bottom) with $\thetalog=0.1$. Additionally, we plot the interface width at $t=50$ (fourth column) for the three potentials with bounds $-1$ and $1$ for obstacle and logarithmic potentials and bounds $-0.99$ and $0.99$ for a regular potential. We observe that, similarly as in 1D case, at a steady-state the sharpest interface appears in the solution with the obstacle potential, whereas the most diffuse interface happens for the case of a regular potential. 
\begin{figure}[ht!]
    \centering
    \begin{subfigure}[h]{0.24\textwidth}
        \centering       \includegraphics[width=\textwidth]{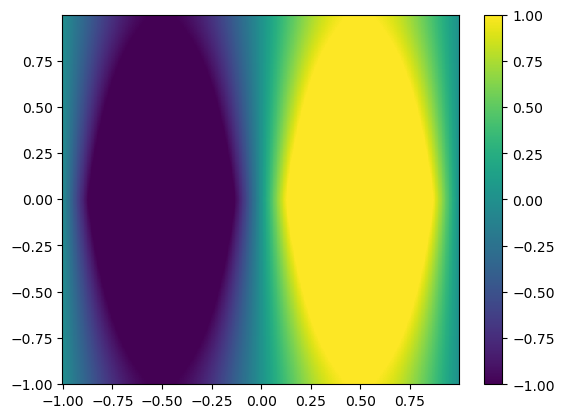}
    \end{subfigure}
    \centering
    \begin{subfigure}[h]{0.24\textwidth}
        \centering
        \includegraphics[width=\textwidth]{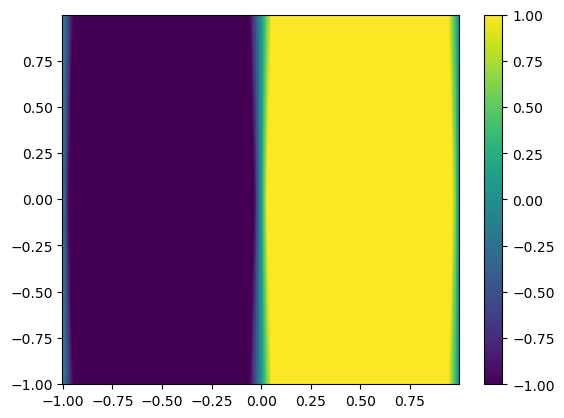}
    \end{subfigure}
    \begin{subfigure}[h]{0.24\textwidth}
        \centering
        \includegraphics[width=\textwidth]{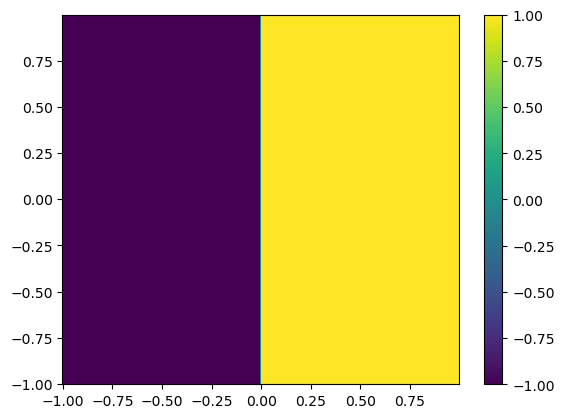}
    \end{subfigure}
    \begin{subfigure}[h]{0.24\textwidth}
        \centering
        \includegraphics[width=\textwidth]{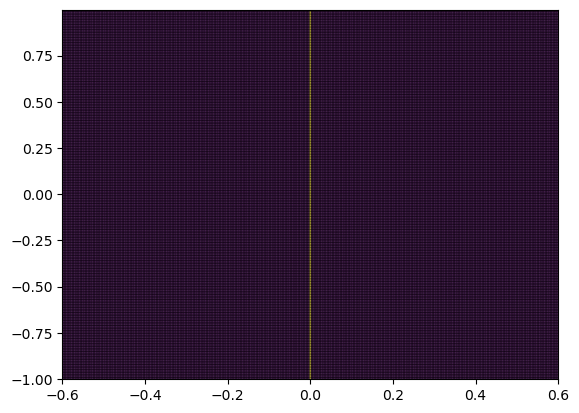}
    \end{subfigure}
    \begin{subfigure}[h]{0.24\textwidth}
        \centering
        \includegraphics[width=\textwidth]{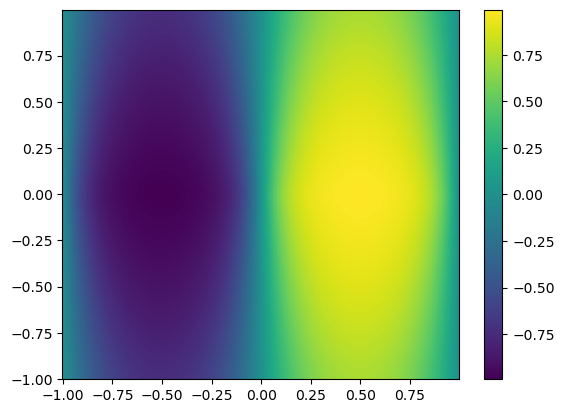}
    \end{subfigure}
    \begin{subfigure}[h]{0.24\textwidth}
        \centering
        \includegraphics[width=\textwidth]{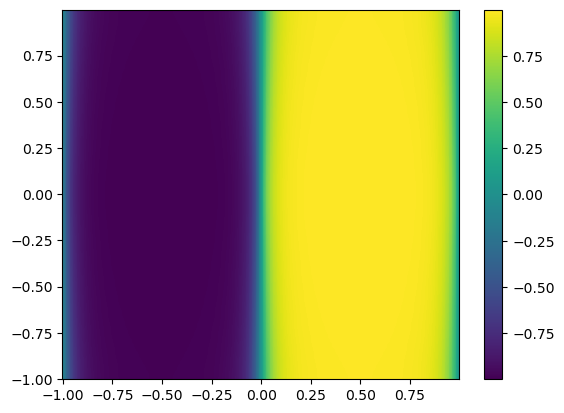}
    \end{subfigure}
    \begin{subfigure}[h]{0.24\textwidth}
        \centering
        \includegraphics[width=\textwidth]{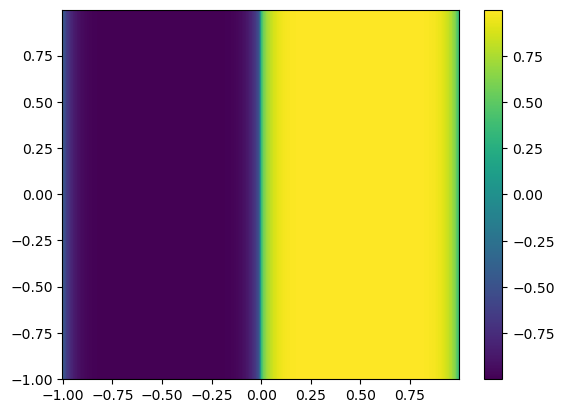}
    \end{subfigure}
    \begin{subfigure}[h]{0.24\textwidth}
        \centering
        \includegraphics[width=\textwidth]{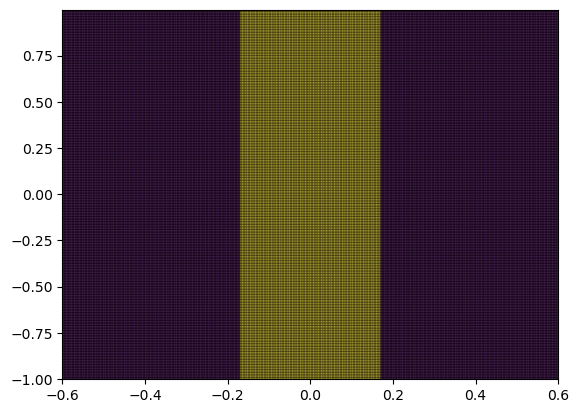}
    \end{subfigure}
    \begin{subfigure}[h]{0.24\textwidth}
        \centering
        \includegraphics[width=\textwidth]{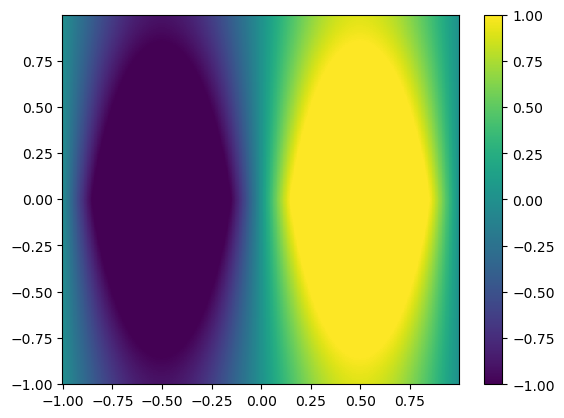}
    \end{subfigure}
    \begin{subfigure}[h]{0.24\textwidth}
        \centering
        \includegraphics[width=\textwidth]{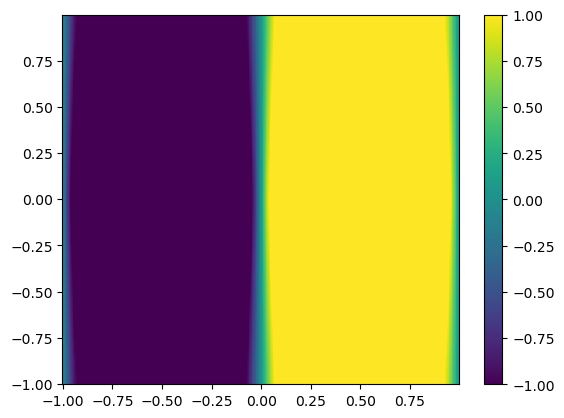}
    \end{subfigure}
    \begin{subfigure}[h]{0.24\textwidth}
        \centering
        \includegraphics[width=\textwidth]{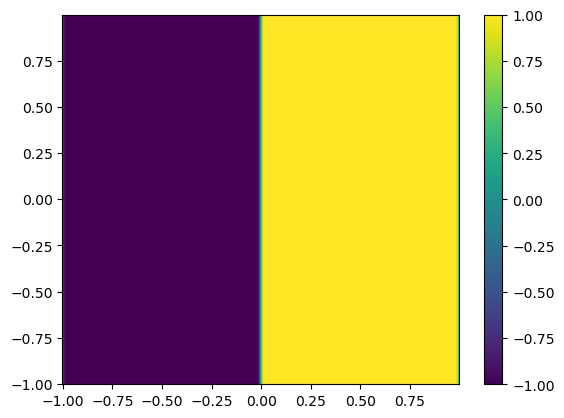}
    \end{subfigure}
    \begin{subfigure}[h]{0.24\textwidth}
        \centering
        \includegraphics[width=\textwidth]{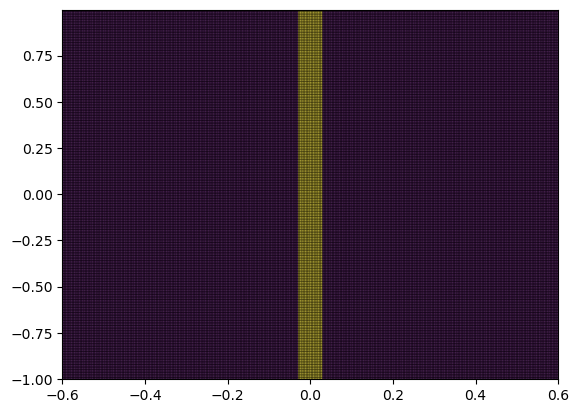}
    \end{subfigure}
    \caption{Evolution of the solution governed by the nonlocal Allen-Cahn equation~\eqref{eq:general} with an obstacle (top), regular (middle), and Flory-Huggins (bottom) potentials in the settings of Example~3 at different time instances. From left to right: $t=1, 3, 50$. The fourth column represents the width of the interface of the corresponding solution at $t=50$ (zoomed-in).}
    \label{sharp_interface_2D_2}
\end{figure}

\subsection{Computational costs}
Next, we discuss the computational cost, which is presented in terms of the number of convolution evaluations for each time-stepping scheme. Additionally, we give the $L^2$-errors relative to each computational cost for comprehensive assessment. To compute the errors at a final time $T$, we perform the numerical simulations with $\Delta t = (T/4) \times 2^{-k},~~k = 0, 1, 2, \dots, 13$ and we set the solution obtained by the second order implicit scheme with $\Delta t = (T/4)\times 2^{-15}$ and $TOL = 10^{-15}$ in Algorithm~\ref{alg:fixed_point_iteration} as a benchmark solution.

\subsection*{Example~4}
We consider $\Omega = (-1,1)^n, n=1,2,3~\cpot= 2$, $\delta=\varepsilon=0.15$, $\thetalog=0.5$, and $T=2$. We set $N=1024$, $256^2$, and $80^3$ for the 1D, 2D, and 3D cases, respectively. The initial conditions are
\begin{align*}
    u_0(x) &= 0.1(\sin(\pi x) + \sin(2\pi x))&\text{(1D)},\\
    u_0(x,y) &= 0.2\sin(\pi x)\sin(\pi y)&\text{(2D)},\\
    u_0(x,y,z) &= 0.2\sin(\pi x)\sin(\pi y)\sin(\pi z)&\text{(3D)}.
\end{align*}

Figures~\ref{comput_costs} and \ref{comput_costs2} display $L^2$-error at the final time with respect to different number of convolution evaluations for obstacle, regular, and logarithmic potentials. We simulate~\eqref{eq:general}  for obstacle potential with $n=1,2,3$, and regular and logarithmic potentials for $n=2$. We find that the second-order implicit scheme is the most accurate overall, although it requires a larger number of convolution evaluations, as anticipated. Additionally, in scenarios where high precision isn't essential and a quick approximation is desired, the second-order explicit scheme offers a suitable compromise.
\begin{figure}[ht!]
    \centering
    \begin{subfigure}[h]{0.49\textwidth}
        \centering
        \includegraphics[width=\textwidth]{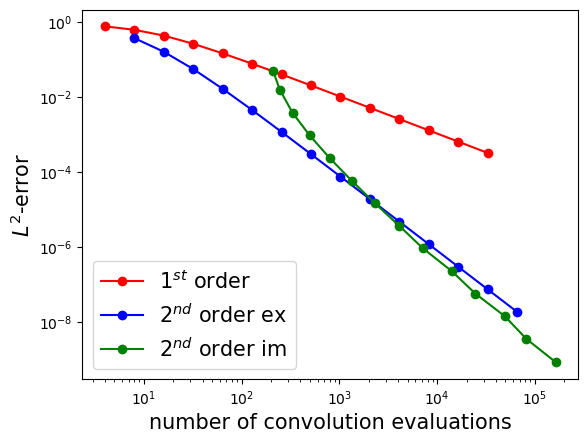}
        \caption{}
    \end{subfigure}
    \begin{subfigure}[h]{0.49\textwidth}
        \centering
        \includegraphics[width=\textwidth]{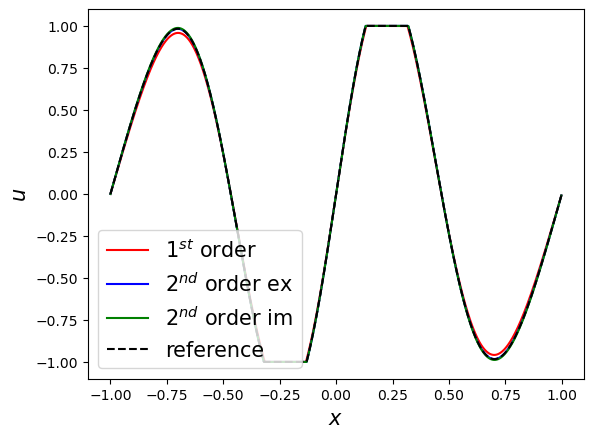}
        \caption{}
    \end{subfigure}
    \begin{subfigure}[h]{0.49\textwidth}
        \centering
        \includegraphics[width=\textwidth]{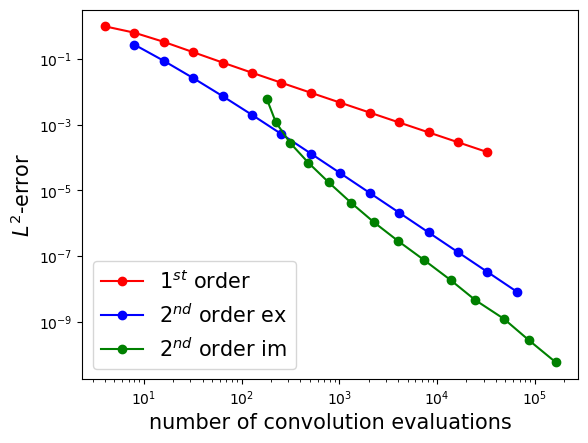}
        \caption{}
    \end{subfigure}
    \begin{subfigure}[h]{0.49\textwidth}
        \centering
        \includegraphics[width=\textwidth]{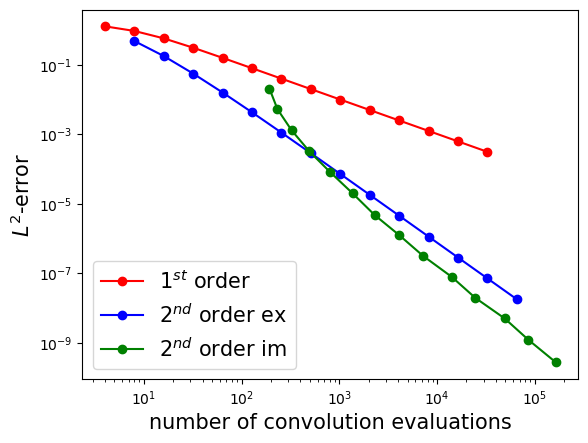}
        \caption{}
    \end{subfigure}
    \caption{Errors at the final time versus computational cost are plotted in $(a), (c),$ and $(d)$ for the 1D, 2D, and 3D cases with obstacle potential, respectively, and the solutions at the final time with relatively the same computational cost for all the three schemes are plotted in $(b)$ for the 1D case.}
    \label{comput_costs}
\end{figure}

\begin{figure}[ht!]
    \centering
    \begin{subfigure}[h]{0.49\textwidth}
         \centering
         \includegraphics[width=\textwidth]{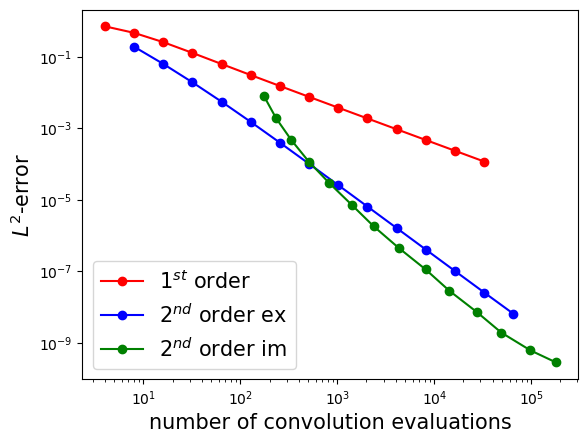}
         \caption{regular potential}
    \end{subfigure}
    \begin{subfigure}[h]{0.49\textwidth}
         \centering
         \includegraphics[width=\textwidth]{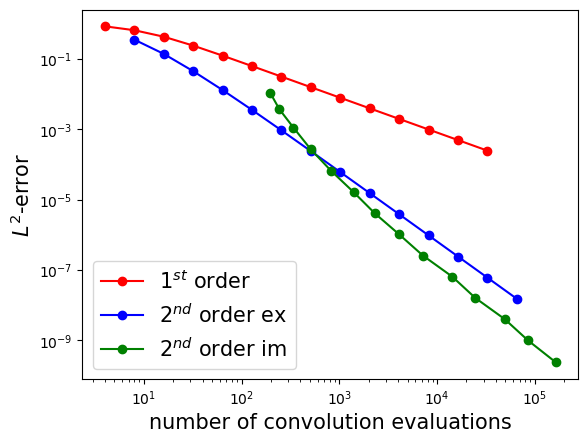}
         \caption{logarithmic potential}
    \end{subfigure}
    \caption{Errors at the final time versus computational cost for regular and logarithmic potentials with $\thetalog=0.5$.}
    \label{comput_costs2}
\end{figure}
\subsection{Dynamics prediction for 3D examples.}
We also provide an illustration in a three-dimensional setting. 
Unless otherwise stated, we consider an obstacle potential case and set $\Omega=(-1,1)^3$, $\cpot=1.6, \varepsilon=\delta=0.1$, which corresponds to $\xi=2.4$, and a time step size $\Delta t = 0.005$.

\subsection*{Example~5}\label{simu_3D} We adopt the second-order explicit scheme to simulate the time evolution of a 3D bubble merging and a star shape up to $T=8$ with mesh size $N = 150^3.$ The initial conditions of the 3D bubble merging and the star shape are respectively given as
\begin{align*}
    u_0(x,y,z) =&\ \frac12 \tanh\left(\frac{0.6 - \sqrt{(x-0.35)^2+y^2+z^2}}{\sqrt{2}\varepsilon}\right) \\
    +&\ \frac12\tanh\left(\frac{0.6 - \sqrt{(x+0.35)^2+y^2+z^2}}{\sqrt{2}\varepsilon}\right),\\
    u_0(x,y,z) =&\ \tanh\left(\frac{0.7 + 0.2\cos(6\eta) - \sqrt{x^2+y^2+z^2}}{\sqrt{2}\varepsilon}\right),
\end{align*}
where,
\[
\eta = 
\begin{cases}
    \tan^{-1}\left(\frac{z}{x}\right), \hspace{25pt} \text{ if } x>0.5,\\
    \pi + \tan^{-1}\left(\frac{z}{x}\right), \hspace{5pt}\text{ otherwise }.
\end{cases}
\]

As observed in Figure~\ref{bubble_evolution_3D}, the two balls gradually shrink and merge into one smaller ball, which eventually disappears as expected. For the star shape, the tips of the star move inward, and the gaps between the tips move outward, forming a ball that finally disappears.
\begin{figure}[ht!]
    \centering
    \begin{subfigure}[h]{0.24\textwidth}
        \centering
        \includegraphics[width=\textwidth]{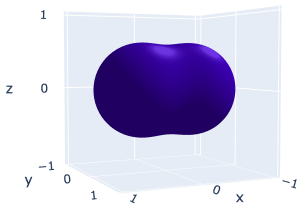}
    \end{subfigure}
    \begin{subfigure}[h]{0.24\textwidth}
        \centering
        \includegraphics[width=\textwidth]{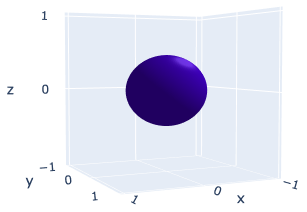}
    \end{subfigure}
    \begin{subfigure}[h]{0.24\textwidth}
        \centering
        \includegraphics[width=\textwidth]{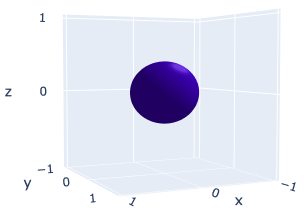}
    \end{subfigure}
    \begin{subfigure}[h]{0.24\textwidth}
        \centering
        \includegraphics[width=\textwidth]{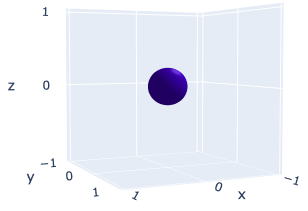}
    \end{subfigure}
    \centering
    \begin{subfigure}[h]{0.24\textwidth}
        \centering
        \includegraphics[width=\textwidth]{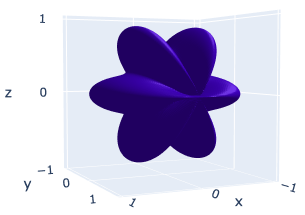}
    \end{subfigure}
    \begin{subfigure}[h]{0.24\textwidth}
        \centering
        \includegraphics[width=\textwidth]{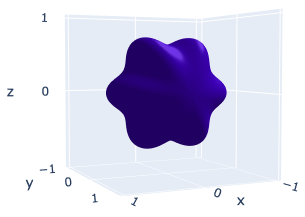}
    \end{subfigure}
    \begin{subfigure}[h]{0.24\textwidth}
        \centering
        \includegraphics[width=\textwidth]{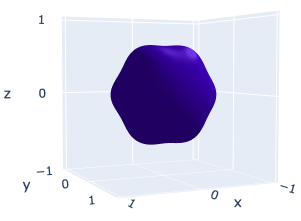}
    \end{subfigure}
    \begin{subfigure}[h]{0.24\textwidth}
        \centering
        \includegraphics[width=\textwidth]{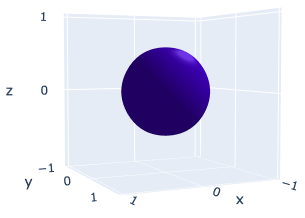}
    \end{subfigure}
    \caption{Time evolution of a 3D bubble merging (top) and a star shape (bottom) at different time instances in Example~\ref{simu_3D}. From left to right: $t=0, 2, 4$ and $8$.}
    \label{bubble_evolution_3D}
\end{figure}
Figure~\ref{random_states_2D3D} presents numerical results for the 2D and 3D simulations with the random initial condition sampled from a uniform distribution, ranging from $-0.95$ to $0.95$ (for 2D) and from $-0.5$ to $0.5$ (for 3D). For the 2D case, we choose $\Omega = (-\pi,\pi)^2$, $\cpot = 1$, $\varepsilon=0.05, \delta=0.08$, and $N = 512^2$ (corresponding to $\xi = 0.5625$). For the 3D case, we set $\Omega = (-1,1)^3$, $\cpot = 1.6$, $\varepsilon=0.1, \delta=0.1$, and $N = 60^3$ (corresponding to $\xi=2.4$).
\begin{figure}[ht!]
    \centering
    \begin{subfigure}[h]{0.24\textwidth}
        \centering        
        \includegraphics[width=\textwidth]{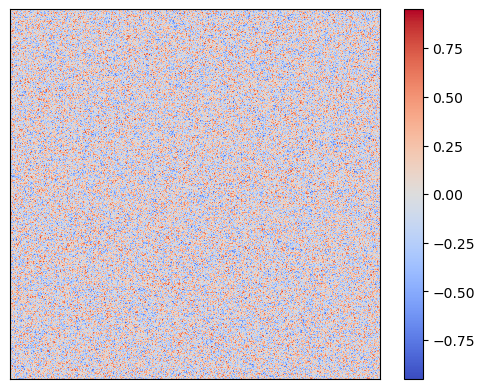}
        \caption{$t = 0$}
    \end{subfigure}
    \begin{subfigure}[h]{0.24\textwidth}
        \centering
        \includegraphics[width=\textwidth]{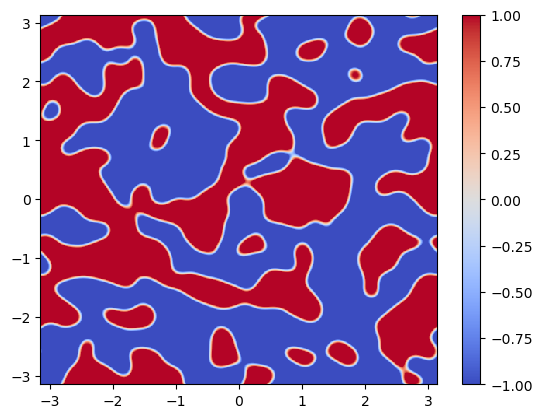}
        \caption{$t = 10$}
    \end{subfigure}
    \begin{subfigure}[h]{0.24\textwidth}
        \centering
        \includegraphics[width=\textwidth]{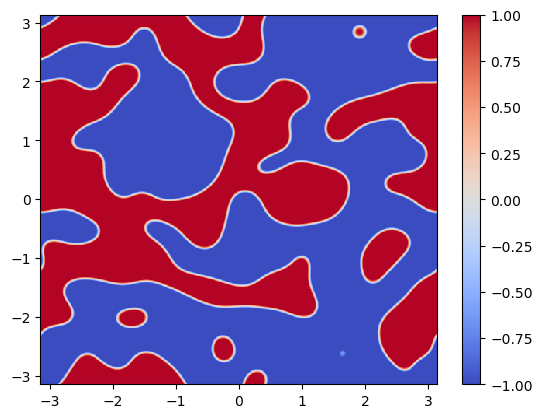}
        \caption{$t = 30$}
    \end{subfigure}
    \begin{subfigure}[h]{0.24\textwidth}
        \centering
        \includegraphics[width=\textwidth]{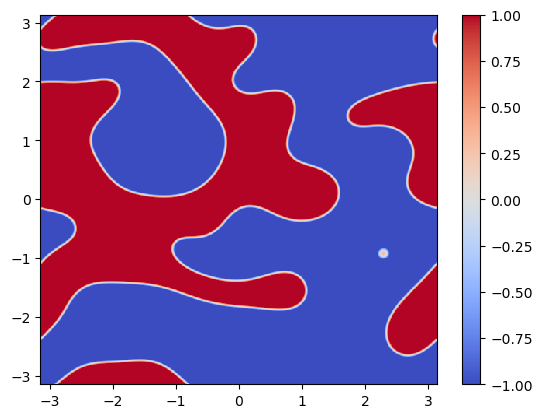}
        \caption{$t = 100$}
    \end{subfigure}
    \begin{subfigure}[h]{0.24\textwidth}
        \centering
        \includegraphics[width=\textwidth]{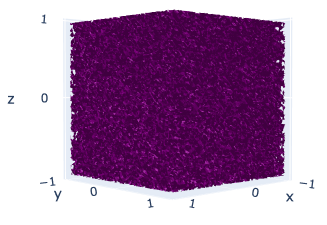}
        \caption{$t = 0$}
    \end{subfigure}
    \begin{subfigure}[h]{0.24\textwidth}
        \centering
        \includegraphics[width=\textwidth]{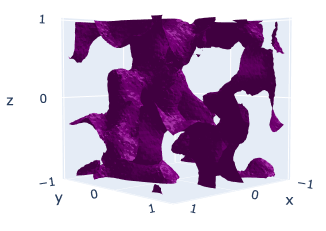}
        \caption{$t = 2$}
    \end{subfigure}
    \begin{subfigure}[h]{0.24\textwidth}
        \centering
        \includegraphics[width=\textwidth]{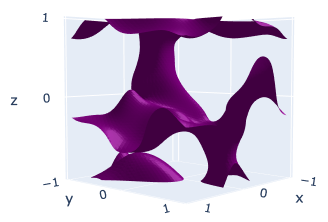}
        \caption{$t = 4$}
    \end{subfigure}
    \begin{subfigure}[h]{0.24\textwidth}
        \centering
        \includegraphics[width=\textwidth]{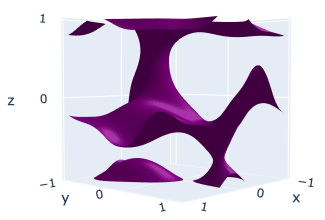}
        \caption{$t = 6$}
    \end{subfigure}
    \caption{Time evolution of the phase separation with a random uniform initial state in a two-dimensional (top) and three-dimensional (bottom) cases at different time instances.}
    \label{random_states_2D3D}
\end{figure}

\subsection{Non-isothermal Allen-Cahn model}\label{sec:NAC}
We demonstrate the applicability of our method to non-isothermal systems, commonly encountered in the solidification of pure materials. Anisotropy is typically inherent in such models and is often represented by imposing nonlinearities on the gradient terms. However, in the nonlocal framework, it can be incorporated through a suitable anisotropic kernel. While we focus on the isotropic model here, our approach can be adapted for nonlocal anisotropic cases, which will be the subject of future work.

More specifically, we consider the following coupled system of equations for the temperature $\t$ and phase-field variable $\p$:
\begin{eqnarray}
\begin{dcases}
\frac{\partial\t}{\partial t} = D \Delta\t+L\frac{\partial u}{\partial t},\\[1.5ex]
\mu\frac{\partial u}{\partial t}+\mathcal{L}u+\partial_u \Phi(u,\t)\ni 0,
\end{dcases}\label{eq:AC_nonisothermal}
\end{eqnarray}
where $D$, $\mu$ and $L$ are the diffusivity, relaxation time and latent heat coefficients, respectively. Here, $\partial_u\Phi(u,\t)$ is a generalized subdifferential of the double-well potential 
\begin{equation*}
    \Phi(u,\t) = F(u)+g(u,\t),
\end{equation*}
where $F$ is defined as in~\eqref{eq:potential_general} and $g(u,\t)$ is an appropriately chosen coupling term. Here, we demonstrate the approach for the obstacle potential, for which $F$ is defined in~\eqref{eq:potential_f0} and~\eqref{eq:potential_psi_obstacle} and the coupling term is defined as in~\cite{kobayashi1993}, see also~\cite{burkovska2023}:
\begin{equation*}
g(u,\t) = -\cpot m(\t) u,\quad\text{with}\quad m(\t)=\left(\frac{\alpha}{\pi}\right)\tan^{-1}\left(\rho(\t_e-\t)\right),\quad0<\alpha<1,\label{eq:coupling_term_m}
\end{equation*}
for which it holds that $|m(\t)|<1/2$. 

We adopt the first-order discretization as outlined in Section~\ref{subsec:first_order_disc} for the phase-field equation and an implicit Euler time-stepping scheme for the temperature equation. We also use an explicit discretization of the coupling term $m(\t)$ that allows to fully decouple the problem. Then, the semi-discrete system reads: Find $(\t^k,u^k)$, $k=1,\dots,K$ such that
\begin{equation*}
    \begin{cases}
     \left(I-\tt D\upDelta\right)\t^k=\t^{k-1}+L(u^k-u^{k-1}),\\
    u^k = {P_{[-1,1]}\left(\frac{1}{\mu/\tt + \xi}\left(\frac{\mu}{\tt}u^{k-1} + \gamma\ast u^{k-1}+c_Fm(\t^{k-1})\right)\right).}
    \end{cases}
\end{equation*}
First, having $\t^{k-1}$ using the projection formula, we compute $u^{k}$. Then, the temperature $\t^k$ is computed from the first equation above. Using the Fourier collocation approach outlined in Section~\ref{sec:spatial_disc} we solve for the temperature in the Fourier space:
\begin{equation}
    \hat{\t}^k_{lm} = {\frac{\hat{\t}^{k-1}_{lm}+L(\hat{u}^k_{lm}-\hat{u}^{k-1}_{lm})}{1-D\tt \lambda_{mul}}},
\end{equation}
where $\lambda_{mul}$ is the multiplier of the Laplacian operator. By applying the inverse Fourier transform, we obtain
{$$\t^k_{ij}=(\mathcal{F}^{-1}_N\hat{\t}^k)_{ij},$$
where $(\mathcal{F}^{-1}_N\hat{\t}^k)_{ij}$ is as defined in \eqref{f_inverse}.}
This is a simple and straightforward approach that allows for an efficient computation of the solution of a coupled non-smooth problem. In the following we provide examples for two- and three-dimensional test cases. 

\subsection*{Example~6}\label{nnac_ex1}
We set $\Omega = (-1,1)^2$, $N=512^2$, $\cpot = 1/4$, $\alpha=0.9$, $\mu = 0.0003$, $\theta_e=1$, $\rho=10$, $L=0.5$, $D=1$, and adopt the first order semi-implicit scheme to simulate the phase separation up to $T=0.2$. We choose $\tt=0.0001$, $\delta = 0.1,$ and $\varepsilon = 0.0251$, which corresponds to $\xi=0.002$. The initial conditions are 
\begin{eqnarray*}
    u_0(x,y)=\begin{cases}
        -1, & -0.9\le x \le 0.9, -0.9\le y \le 0.9\\
        1,& \text{ otherwise}
    \end{cases} \text{ and }
    \t_0 \equiv 0.
\end{eqnarray*}
This example, inspired from an example in~\cite{kobayashi1993}, see also~\cite{burkovska2023}, is an illustration of the solidification of pure materials using nonlocal isothropic model. Here, we have a solid region around the boundaries of the domain and a pool of liquid in the interior. As the time evolves, the solidification that occurs from the walls propagates inward until all liquid solidifies. As illustrated in Figure~\ref{nnac1}, this corresponds to the liquid region (marked in blue) in the snapshot of the phase-field variable to gradually shrink over time and eventually disappears.
\begin{figure}[ht]
    \centering
    \begin{subfigure}[h]{0.24\textwidth}
        \centering        
        \includegraphics[width=\textwidth, height=2.4cm]{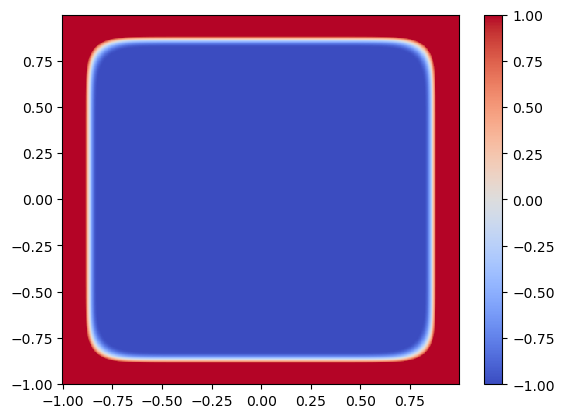}
    \end{subfigure}
    \centering
    \begin{subfigure}[h]{0.24\textwidth}
        \centering        
        \includegraphics[width=\textwidth, height=2.4cm]{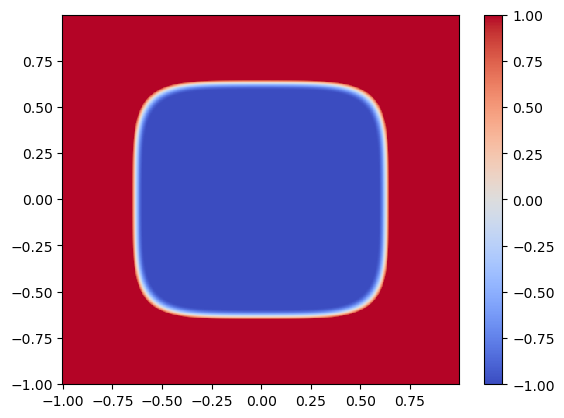}
    \end{subfigure}
    \centering
    \begin{subfigure}[h]{0.24\textwidth}
        \centering        
        \includegraphics[width=\textwidth, height=2.4cm]{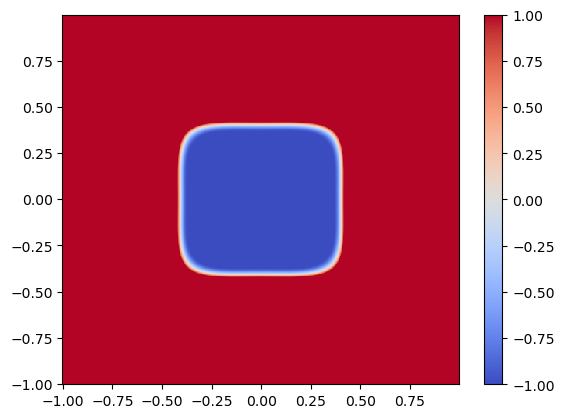}
    \end{subfigure}
    \centering
    \begin{subfigure}[h]{0.24\textwidth}
        \centering        
        \includegraphics[width=\textwidth, height=2.4cm]{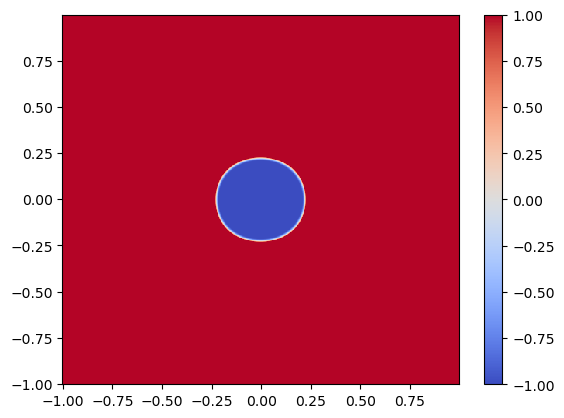}
    \end{subfigure}
    \centering
    \begin{subfigure}[h]{0.24\textwidth}
        \centering        
        \includegraphics[width=\textwidth, height=2.4cm]{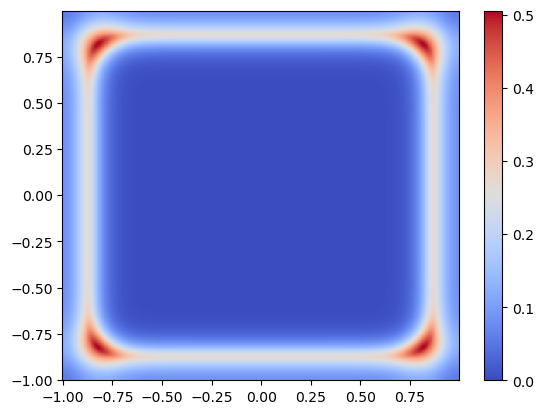}
    \end{subfigure}
    \begin{subfigure}[h]{0.24\textwidth}
        \centering
        \includegraphics[width=\textwidth, height=2.4cm]{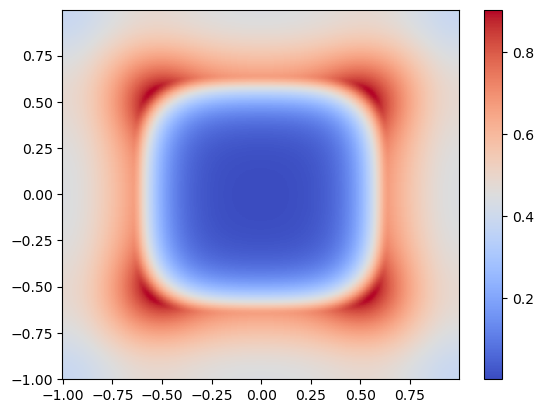}
    \end{subfigure}
    \centering
    \begin{subfigure}[h]{0.24\textwidth}
        \centering        
        \includegraphics[width=\textwidth, height=2.4cm]{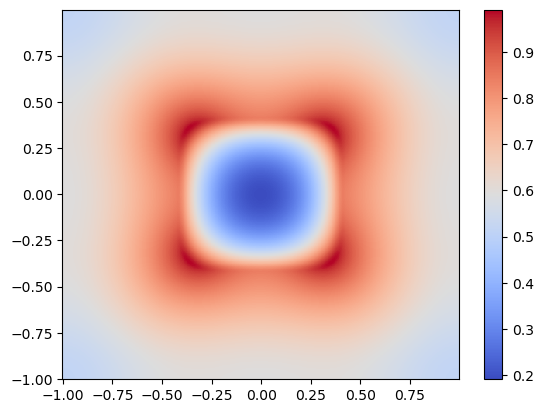}
    \end{subfigure}
    \centering
    \begin{subfigure}[h]{0.24\textwidth}
        \centering        
        \includegraphics[width=\textwidth, height=2.4cm]{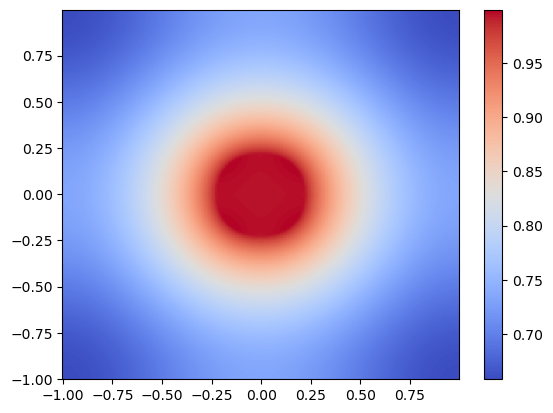}
    \end{subfigure}
   \caption{Phase field (top) and temperature (bottom) solutions at different time instances in the settings of Example~6. From left to right: $t=0.006, 0.04, 0.08$, and $0.2$.}
    \label{nnac1}
\end{figure}

Lastly, we demonstrate the performance of the model in three-dimensional case using deterministic and random initial conditions for the phase-field variable and constant temperature.

\subsection*{Example~7}\label{nnac3D}
We set $\Omega = (-1,1)^3$, $N=100^3$, $\tt=0.0001$ and adopt the same parameter settings as in the previous example, along with similar initial conditions:
\begin{eqnarray*}
    u_0({\bf{x}})=\begin{cases}
        -1, & -0.9\le {\bf{x}} \le 0.9,\ {\bf{x}}\in\bbR^3,\\
        1,& \text{ otherwise}
    \end{cases} \quad\text{ and }
    \t_0 \equiv 0.
\end{eqnarray*}
The corresponding snapshots of the phase-field variable are presented on Figure~\ref{nnac_3D} (top row). We can observe a similar behavior in the solution as seen in the two-dimensional case. 
Next, we consider a random initial condition for the phase-field variable $u$, taken to be a Gaussian random field with mean zero and Gaussian covariance kernel. We consider the same settings as before, apart from  $\mu = 0.0005$, $T=0.003$ and chose $\delta = 0.1,$ and $\varepsilon = 0.08$, which corresponds to $\xi=2.31$. 
On Figure~\ref{nnac_3D} we present the snapshots of the time evolution for the phase-field variable.
\begin{figure}[ht]
    \centering
    \begin{subfigure}[h]{0.24\textwidth}
        \centering        
        \includegraphics[width=1\textwidth]{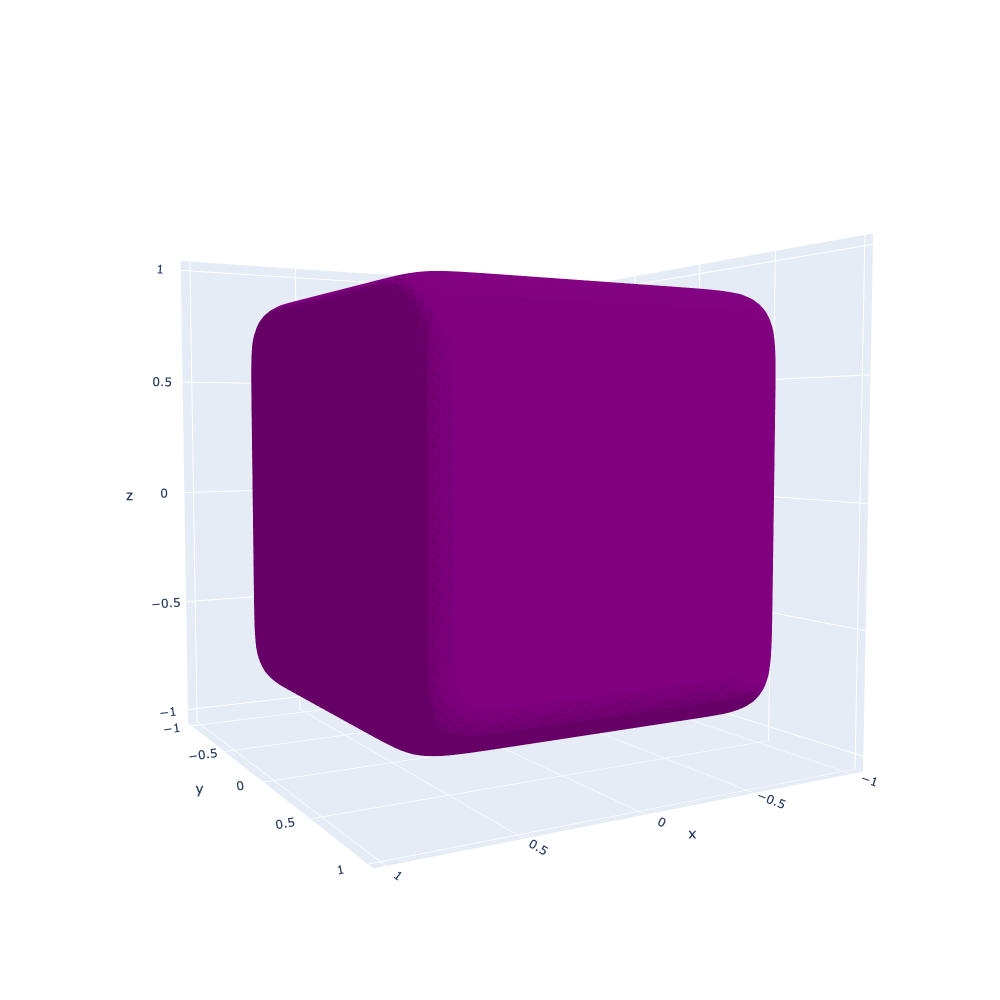}
    \end{subfigure}
    \centering
    \begin{subfigure}[h]{0.24\textwidth}
        \centering        
        \includegraphics[width=1\textwidth]{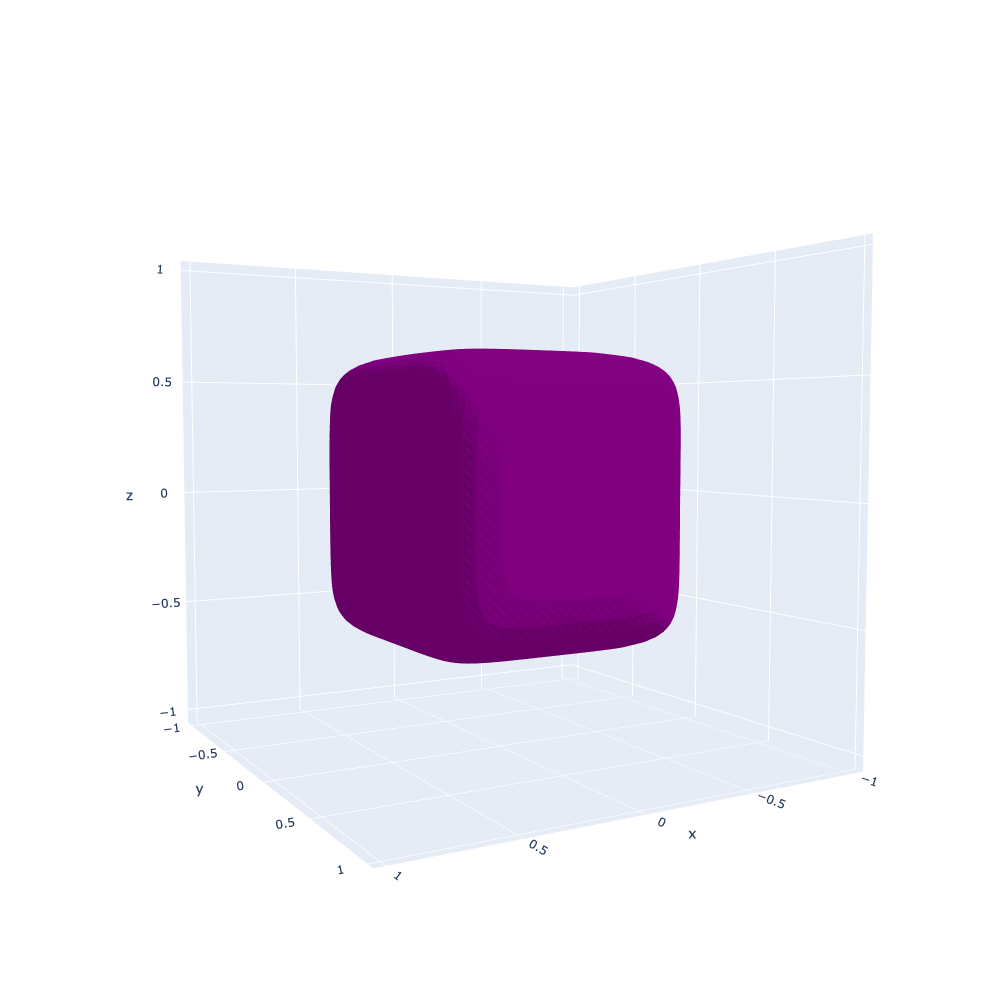}
    \end{subfigure}
    \centering
    \begin{subfigure}[h]{0.24\textwidth}
        \centering        
        \includegraphics[width=1\textwidth]{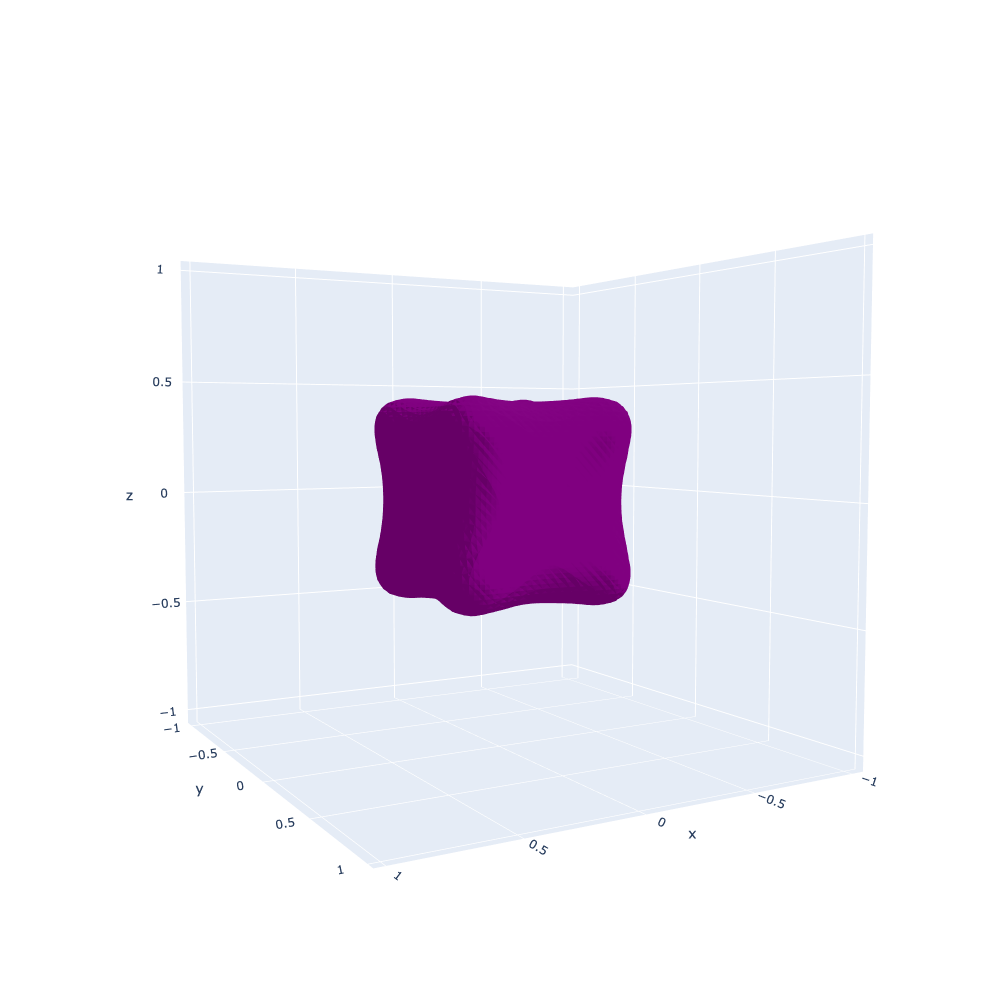}
    \end{subfigure}
    \centering
    \begin{subfigure}[h]{0.24\textwidth}
        \centering        
       \includegraphics[width=1\textwidth]{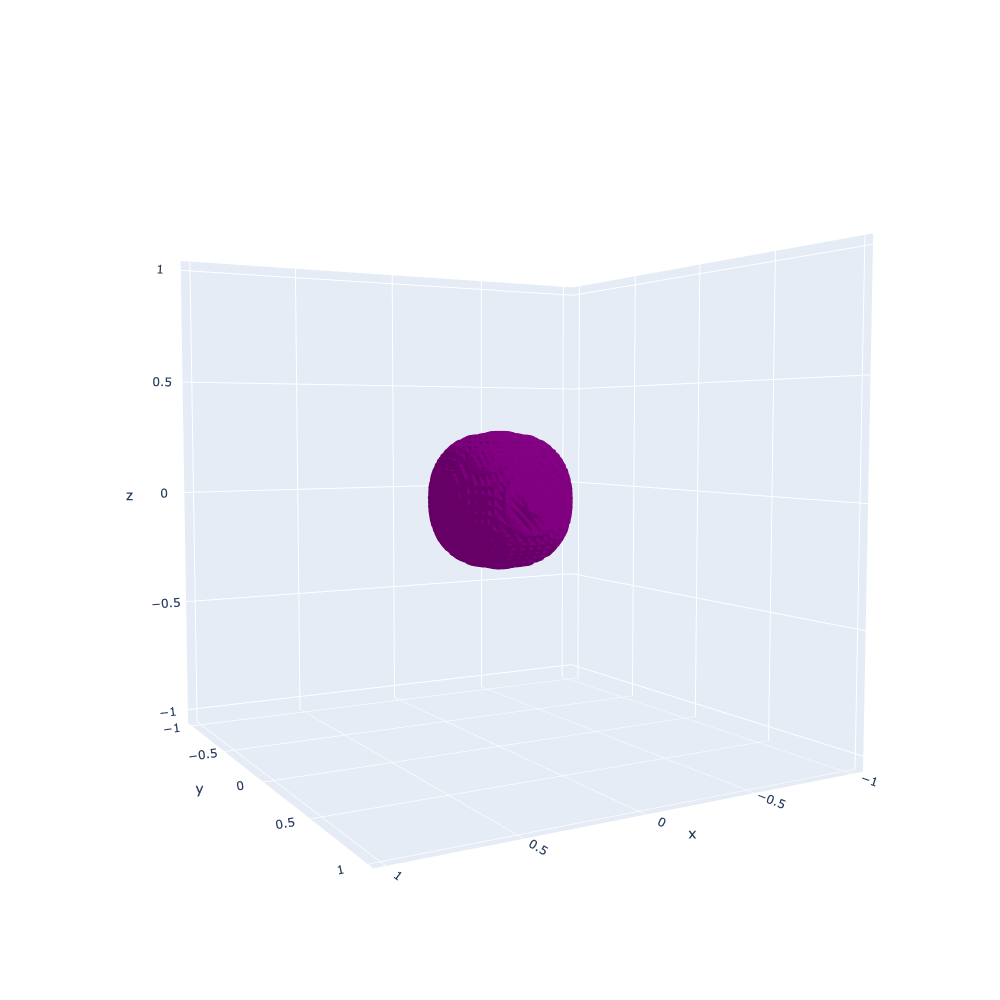}
    \end{subfigure}\\
    \centering
    \begin{subfigure}[h]{0.24\textwidth}
        \centering        
        \includegraphics[width=1\textwidth]{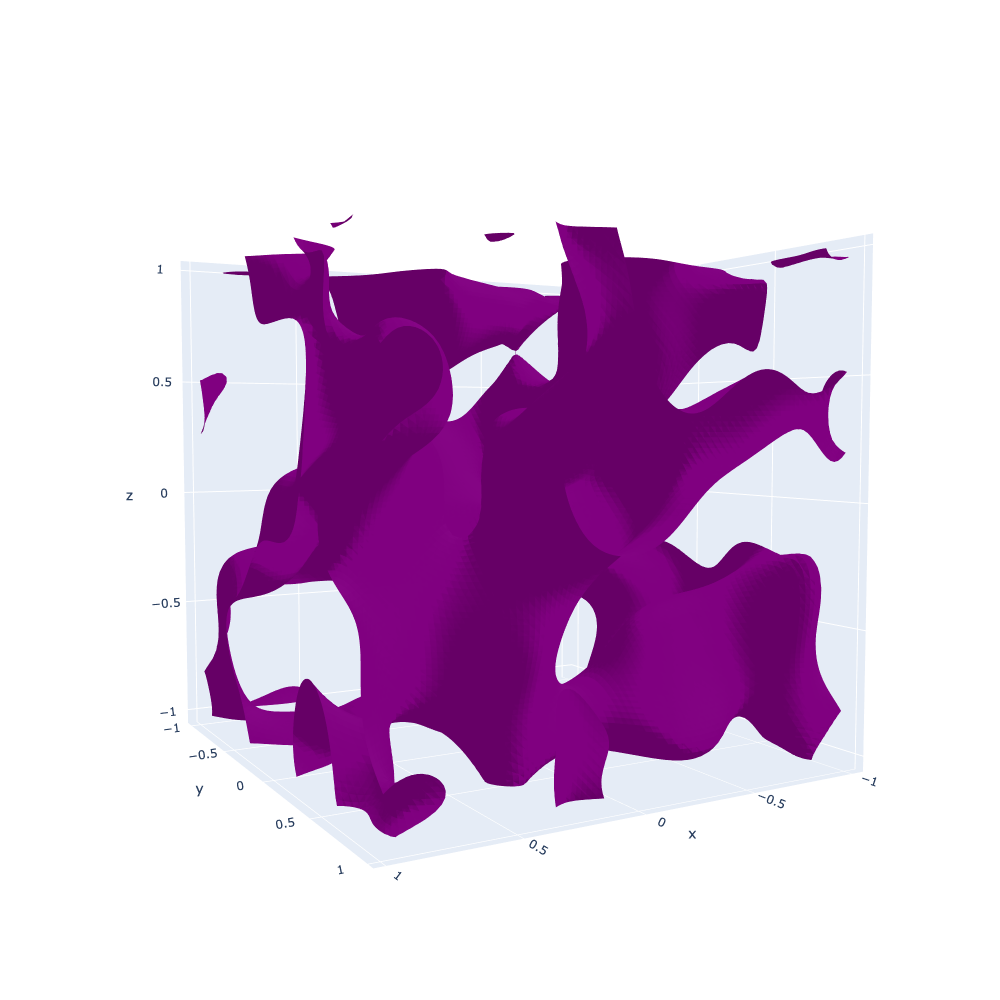}
    \end{subfigure}
    \centering
    \begin{subfigure}[h]{0.24\textwidth}
        \centering        
        \includegraphics[width=1\textwidth]{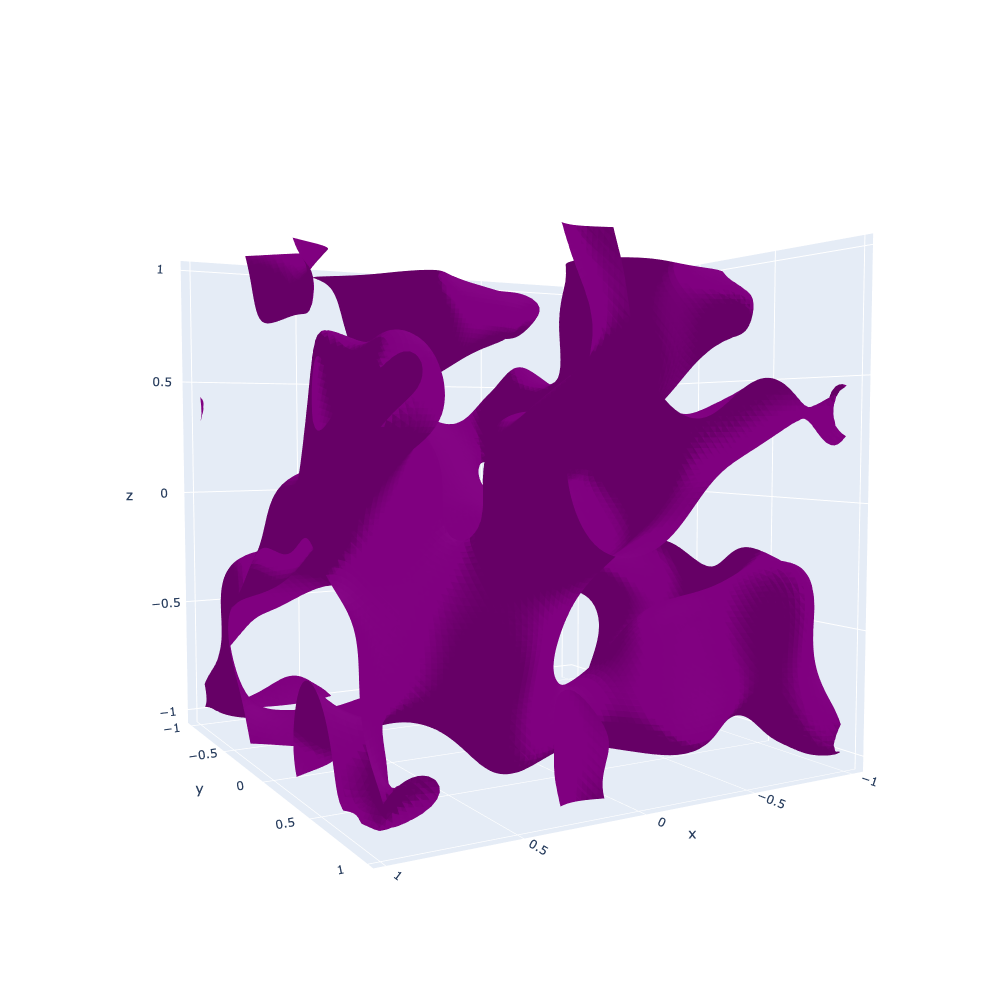}
    \end{subfigure}
    \centering
    \begin{subfigure}[h]{0.24\textwidth}
        \centering        
        \includegraphics[width=1\textwidth]{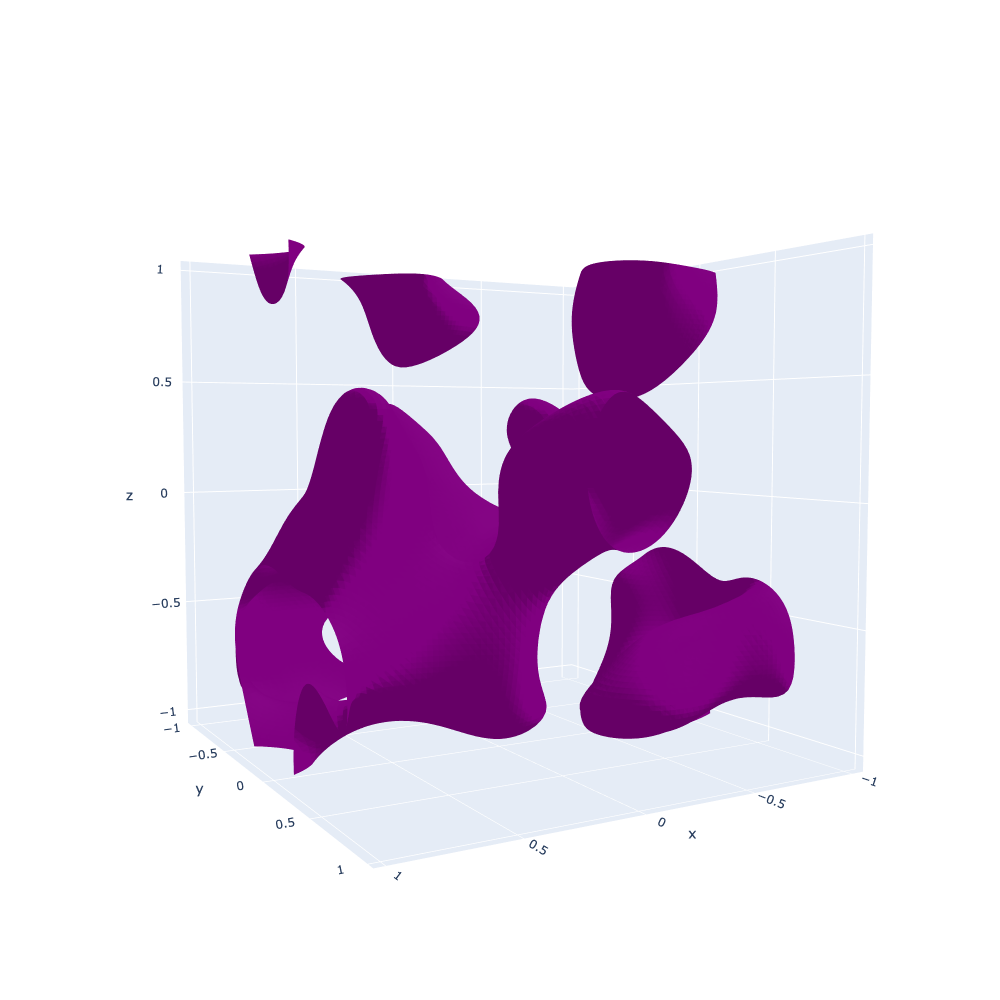}
    \end{subfigure}
    \centering
    \begin{subfigure}[h]{0.24\textwidth}
        \centering        
       \includegraphics[width=1\textwidth]{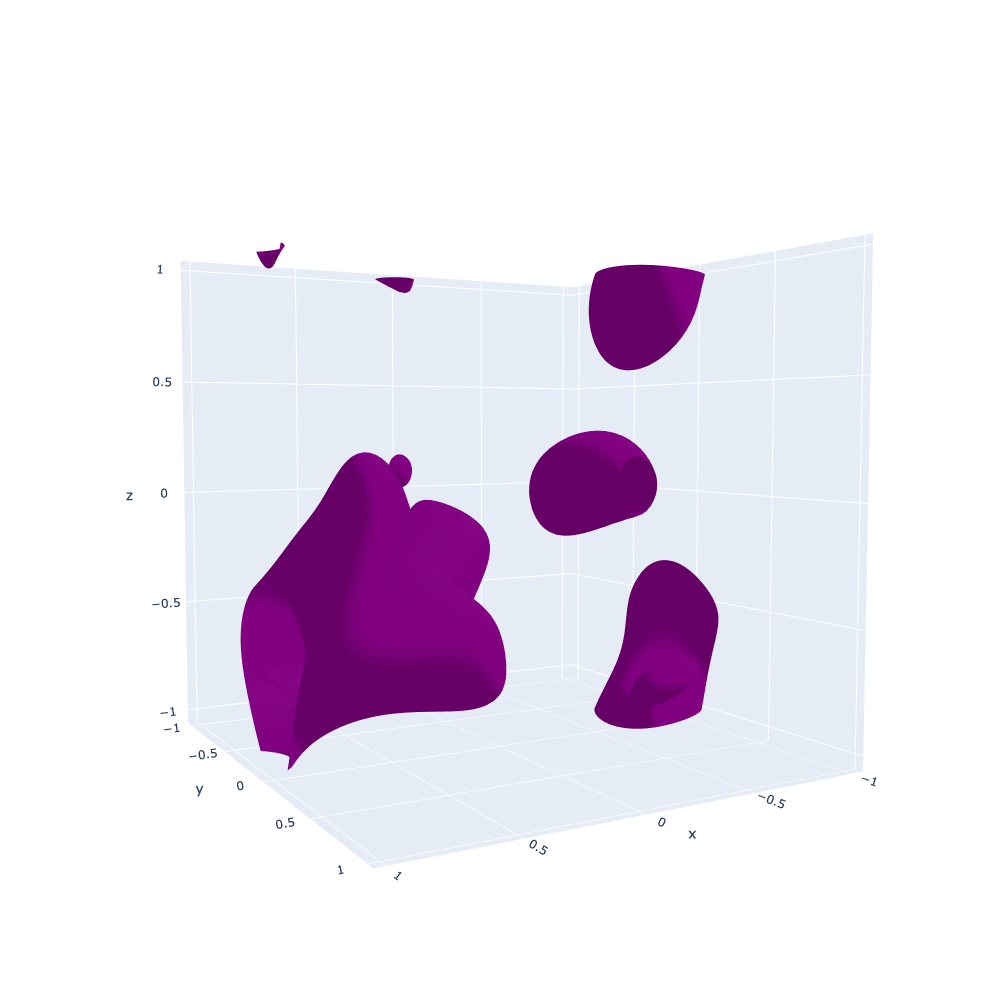}
    \end{subfigure}
   \caption{Evolution of the phase field solutions with the initial condition as in Example~6 (top) and random initial condition as in Example~7 (bottom). From left to right: $t=0.006, 0.04, 0.08$, and $0.2$ (top);  $t=0.0008, 0.001, 0.002$, and $0.003$ (bottom).}
    \label{nnac_3D}
\end{figure}

\section{Conclusion}\label{sec:conclusion}
In this work, we present a comprehensive approach to discretizing the nonlocal Allen-Cahn equation, considering both temporal and spatial dimensions. We design first and second-order energy-stable temporal discretizations, complemented by efficient spatial discretization using the Fourier collocation method based on FFT, resulting in a fully scalable and efficient method. Future research could enhance and broaden this approach by exploring its application to more complex equations like Cahn-Hilliard or non-isothermal systems, and more complex boundary conditions or geometries. Additionally, investigating parallelization techniques for scalability in high-performance computing environments would be valuable.

%\bibliographystyle{acm}
%\bibliography{refs}

\end{document}